\DeclareMathOperator{\mre}{Re}
\theoremstyle{plain} 
\newtheorem{theorem}{Theorem} 
\newtheorem{lemma}[theorem]{Lemma} 
\newtheorem{corollary}[theorem]{Corollary} 
\theoremstyle{remark} 
\newtheorem*{remark}{Remark} 
\newtheorem*{problem}{Problem}
\begin{document} 
\title[High pseudomoments of the Riemann zeta function]{High pseudomoments of \\ the Riemann zeta function} 
\date{\today} 

\author{Ole Fredrik Brevig} 
\address{Department of Mathematical Sciences, Norwegian University of Science and Technology (NTNU), NO-7491 Trondheim, Norway} 
\email{ole.brevig@math.ntnu.no}

\author{Winston Heap} 
\address{Department of Mathematics, University College London, 25 Gordon Street, London WC1H.} 
\email{winstonheap@gmail.com}

\begin{abstract}
	The pseudomoments of the Riemann zeta function, denoted $\mathcal{M}_k(N)$, are defined as the $2k$th integral moments of the $N$th partial sum of $\zeta(s)$ on the critical line. We improve the upper and lower bounds for the constants in the estimate $\mathcal{M}_k(N) \asymp_k (\log{N})^{k^2}$ as $N\to\infty$ for fixed $k\geq1$, thereby determining the two first terms of the asymptotic expansion. We also investigate uniform ranges of $k$ where this improved estimate holds and when $\mathcal{M}_k(N)$ may be lower bounded by the $2k$th power of the $L^\infty$ norm of the $N$th partial sum of $\zeta(s)$ on the critical line. 
\end{abstract}

\subjclass[2010]{Primary 11M99. Secondary 42B30.}

\maketitle

\section{Introduction} Let $k$ be a positive real number, and let $\zeta(s) = \sum_{n=1}^\infty n^{-s}$ denote the Riemann zeta function. Over the past century, the \emph{moments} 
\begin{equation}\label{eq:moments} 
	M_k(T) = \frac{1}{T}\int_0^T |\zeta(1/2+it)|^{2k}\,dt 
\end{equation}
have received considerable attention. The cases $k=1$ and $k=2$ were computed by Hardy and Littlewood \cite{HL16} and Ingham \cite{Ingham26}, respectively, who found that as $T\to\infty$,
\[M_1(T) \sim \log{T} \qquad\text{and}\qquad M_2(T) \sim \frac{1}{2\pi^2} (\log{T})^4.\]
Keating and Snaith \cite{KS00} conjectured that 
\begin{equation}\label{eq:ksconj} 
	\lim_{T \to \infty} \frac{M_k(T)}{(\log{T})^{k^2}} = a(k)g(k) 
\end{equation}
for every fixed positive real number $k$. Here $a(k)$ denotes the Euler product 
\begin{equation}\label{eq:ak} 
	a(k) = \prod_{p} \left(1-\frac{1}{p}\right)^{k^2}\sum_{j=0}^\infty \frac{d^2_k(p^j)}{p^j}, \qquad d_k(p^j) = \binom{j+k-1}{j}, 
\end{equation}
and $g(k)$ is a specific function arising from random matrix theory.

One motivation for studying the moments \eqref{eq:moments} is their connection to large values of the Riemann zeta function on the critical line. Set 
\begin{equation}\label{eq:lindelof} 
	M(T) = \max_{0\leq t \leq T} |\zeta(1/2+it)|. 
\end{equation}
The Lindel\"of hypothesis states that $M(T) \ll_\varepsilon T^\varepsilon$ for every $\varepsilon>0$, and it follows from the Riemann hypothesis that $\log M(T) \ll \log{T}/\log\log{T}$. Clearly, \eqref{eq:lindelof} can be computed as the following limit of the moments 
\begin{equation}\label{eq:lindelimit} 
	M(T) = \lim_{k\to\infty} [M_k(T)]^{1/(2k)}. 
\end{equation}
Farmer, Gonek and Hughes \cite{FGH07} demonstrated that the conjecture \eqref{eq:ksconj} cannot hold uniformly for $k\geq C\sqrt{\log{T}/\log\log{T}}$ for some specific constant $C$. However, by inserting the largest possible $k$ into \eqref{eq:lindelimit}, they conjectured that
\[M(T) = \exp\left(\left(\frac{1}{\sqrt{2}}+o(1)\right)\sqrt{\log{T}\log\log{T}}\right).\]
This conjecture was also derived by other methods.

In the present paper, we investigate similar problems for \emph{pseudomoments} of the Riemann zeta function. The pseudomoments exhibit some of the same properties as the moments \eqref{eq:moments}, while being comparably tractable in many cases. For a Dirichlet series $f(s) = \sum_{n=1}^\infty a_n n^{-s}$, its $N$th partial sum is
\[S_Nf(s) = \sum_{n=1}^N a_n n^{-s}.\]
The $k$th pseudomoment of the Riemann zeta function is the limit 
\begin{equation}\label{eq:pseudomoments} 
	\mathcal{M}_k(N) = \lim_{T\to \infty} \frac{1}{T}\int_0^T |S_N \zeta(1/2+it)|^{2k} \, dt. 
\end{equation}
Expanding the integrand and computing, we get that $\mathcal{M}_1(N)\sim \log{N}$. The study of pseudomoments was initiated by Conrey and Gamburd \cite{CG06}, who demonstrated that if $k$ is a fixed positive integer, then 
\begin{equation}\label{eq:conreygamburd} 
	\lim_{N\to \infty} \frac{\mathcal{M}_k(N)}{(\log{N})^{k^2}} = a(k) \gamma(k). 
\end{equation}
Here $a(k)$ is the Euler product \eqref{eq:ak} and $\gamma(k)$ is the volume of the convex polytope 
\begin{equation}\label{eq:Pk} 
	\mathcal{P}_k = \left\{(x_{ij}) \in \mathbb{R}^{k^2} \,:\, x_{ij}\geq0, \quad \sum_{i=1}^k x_{ij} \leq 1, \quad \sum_{j=1}^k x_{ij} \leq 1\right\}. 
\end{equation}
In particular, setting $k=2$ in \eqref{eq:conreygamburd} gives that $\mathcal{M}_2(N) \sim (\log{N})^4 / \pi^2$.

Bondarenko, Heap and Seip \cite{BHS15} investigated \eqref{eq:pseudomoments} for continuous $k$. A special case of their main result implies that for every fixed real number $k>1/2$, it holds that 
\begin{equation}\label{eq:Mkasymp} 
	\mathcal{M}_k(N) \asymp_k (\log{N})^{k^2} 
\end{equation}
as $N\to\infty$. The situation for $0<k\leq1/2$ is less clear, we refer again to \cite{BHS15} and to the recent work of the second named author \cite{Heap17}. 

The estimates for the implied constant in the upper bound of \eqref{eq:Mkasymp} were recently substantially improved in \cite{BBSSZ}. Setting\footnote{Although we do not know that the limit \eqref{eq:Cklimit} exists when $k$ is not an integer (in which case the existence follows \eqref{eq:conreygamburd}), we will slightly abuse the notation and assume that $\mathcal{C}(k)$ exists. This can be justified by noting that the upper and lower bounds are actually upper and lower bounds for $\limsup$ and $\liminf$ of the limit \eqref{eq:Cklimit}, respectively.} 
\begin{equation}\label{eq:Cklimit} 
	\mathcal{C}(k) = \lim_{N\to\infty} \frac{M_k(N)}{(\log{N})^{k^2}}, 
\end{equation}
the previously known lower and upper bounds combine to
\[\exp(-(2+o(1))k^2\log{k})\leq \mathcal{C}(k)\leq \exp(-(1+o(1))k^2\log{k}),\]
as $k\to\infty$. The main goal of the present paper is to sharpen this estimate and to obtain a uniform range of $k$ where this improved estimate holds.
\begin{theorem}\label{thm:asymptotics} 
	Uniformly for $2 \leq k \leq c \sqrt{\log\log{N}}$ it holds that 
	\[\frac{\mathcal{M}_k(N)}{(\log{N})^{k^2}} \geq \exp\left(-k^2\log{k}-k^2\log\log{k} + O(k^2)\right).\]
	Uniformly for $2 \leq k \leq C_1 \log{N}/\log\log{N}$ it holds that 
	\[\frac{\mathcal{M}_k(N)}{(\log{N})^{k^2}} \leq \exp\left(-k^2\log{k}-k^2\log\log{k} + O(k^2)\right).\]
	The upper bound does not hold for $k = C_2 \log{N}/\log\log{N}$. 
\end{theorem}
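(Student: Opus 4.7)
My plan rests on the Parseval identity, valid for integer $k \ge 1$,
\[\mathcal{M}_k(N) = \sum_{n \ge 1} c_n^2, \qquad c_n = \sum_{\substack{n_1 \cdots n_k = n \\ n_i \le N}} \frac{1}{\sqrt{n_1 \cdots n_k}},\]
with real $k$ handled by standard interpolation inequalities for $L^{2k}$ moments.

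For the \textbf{upper bound}, I use $c_n \le d_k(n)/\sqrt{n}$ (with equality for $n \le N$) and $c_n = 0$ for $n > N^k$, giving $\mathcal{M}_k(N) \le \sum_{n \le N^k} d_k(n)^2/n$. The Dirichlet series $\sum_n d_k(n)^2/n^s = \zeta(s)^{k^2} G_k(s)$, where $G_k$ is holomorphic for $\mre(s) > 1/2$ with $G_k(1) = a(k)$, combined with Perron's formula yields
\[\sum_{n \le X} \frac{d_k(n)^2}{n} \sim \frac{a(k)(\log X)^{k^2}}{\Gamma(k^2+1)},\]
uniformly for $k^2 = o(\log X)$. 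Taking $X = N^k$, using Stirling for $\Gamma(k^2+1)$, and invoking the asymptotic $\log a(k) = -k^2\log\log k + O(k^2)$ --- derivable from \eqref{eq:ak} by splitting primes at $p = k^2$, applying Laplace's method to the hypergeometric $\sum_j d_k(p^j)^2/p^j = {}_2F_1(k,k;1;1/p)$, and summing via Mertens' theorem --- produces the claimed bound.

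For the \textbf{lower bound}, the analogous estimate $\mathcal{M}_k(N) \ge \sum_{n \le N} d_k(n)^2/n$ gives only $\exp(-2k^2\log k + O(k^2))$, short by a factor $\exp(k^2 \log k)$. I instead implement the Conrey--Gamburd framework quantitatively: after unpacking the condition $\prod n_i = \prod m_j$ prime-by-prime, the pseudomoment translates into a sum over nonnegative integer matrices indexed by primes whose row- and column-sum constraints match those of $\mathcal{P}_k$. Restricting to primes $p \le y$ for a suitable $y$ factors the sum as an Euler product approximating $a(k)$ times a lattice-point count in the $\log N$-dilate of $\mathcal{P}_k$. When $k \le c\sqrt{\log\log N}$, this count is asymptotic to $\gamma(k)(\log N)^{k^2}$, and the estimate $\log \gamma(k) = -k^2\log k + O(k^2)$ from Birkhoff-polytope volume bounds completes the proof.

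For the \textbf{failure at $k = C_2\log N/\log\log N$}, use $\log k = \log\log N(1 + o(1))$ and $\log\log k = \log\log\log N(1 + o(1))$ to simplify the target $\exp(k^2\log\log N - k^2\log k - k^2\log\log k + O(k^2))$ to $\exp(k^2(C - \log C_2) + o(k^2))$, with $C$ denoting the constant implicit in the $O(k^2)$. For $C_2 > e^C$ this tends to $0$, while trivially $\mathcal{M}_k(N) \ge c_1^2 = 1$, violating the upper bound. The main \textbf{obstacle} is the lower bound: extracting a uniform error $O(k^2)$ from the Conrey--Gamburd argument requires simultaneous control of the Euler-product truncation, the lattice-point-versus-volume discrepancy for $\mathcal{P}_k$, and the loss from primes in $(y, N]$; balancing these errors produces the range $k \le c\sqrt{\log\log N}$.
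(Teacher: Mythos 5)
Your failure argument at $k=C_2\log N/\log\log N$ matches the paper's, and your estimates $\log a(k)=-k^2\log\log k+O(k^2)$ and $\log\gamma(k)=-k^2\log k+O(k^2)$ agree with Lemmas~\ref{lem:mdsfactor} and~\ref{lem:volumeest}. But there are two genuine gaps.

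The first, and fatal, one is the treatment of real $k$: ``standard interpolation inequalities'' do not give the stated uniformity. Writing $m=\lfloor k\rfloor$ and $\alpha=k-m$, the three-line inequality $\|f\|_{2k}\leq\|f\|_{2m}^{1-\theta}\|f\|_{2(m+1)}^{\theta}$ produces an upper bound for $\mathcal{M}_k(N)$ whose $\log N$-exponent is $k^2+\alpha(1-\alpha)$, and the reversed interpolation $\|f\|_{2m}\leq\|f\|_{2}^{1-\theta}\|f\|_{2k}^{\theta}$ produces a lower bound whose $\log N$-exponent is $k^2-\alpha(k-1)$. The resulting loss factors, $(\log N)^{O(1)}$ and $(\log N)^{O(k)}$ respectively, cannot be absorbed into $\exp(O(k^2))$ on the stated ranges; for the lower bound the loss already dwarfs the entire claimed estimate once $k\ll\sqrt{\log\log N}$, and for the upper bound it does so near $k=2$. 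The paper replaces interpolation with Weissler's inequality (Lemma~\ref{lem:weissler}), a hypercontractive estimate $\|\mathcal{W}_\varrho f\|_{q_2}\leq\|f\|_{q_1}$ for $\varrho\leq\sqrt{q_1/q_2}$, applied to the completely multiplicative twist $\varrho^{\Omega(n)}$. Choosing $\varrho=\sqrt{k/\lfloor k\rfloor}$ (or $\sqrt{k/\lceil k\rceil}$) forces $\lfloor k\rfloor^2\varrho^4=k^2$, so the $\log N$-exponent lands on $k^2$ exactly, with no loss. This is a genuinely different tool from interpolation and is essential to the argument.

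The second gap is that your lower bound is a roadmap rather than a proof. You correctly identify the three error sources --- Euler-product truncation, lattice-point-versus-volume discrepancy, loss from large primes --- but controlling them simultaneously and uniformly \emph{is} the content of Theorem~\ref{thm:twistedmoment}: the multiple Dirichlet series $F_{k,\varrho}$, the $2k$-fold Perron integral, the saddle-point truncation at height $\sqrt{\log N}$, Lemma~\ref{lem:Akrhoest} to control $A_{k,\varrho}$ near the origin, and the reverse Perron step that recovers $\gamma(k,\varrho)$ as a polytope integral. Without carrying this out, the range $k\leq c\sqrt{\log\log N}$ has no derivation. Your Selberg--Delange route to the upper bound for integer $k$ is a reasonable alternative to the paper's Rankin-plus-Norton approach, but it likewise requires a uniform-in-$k$ Tauberian theorem that you assert but do not prove.
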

In Theorem~\ref{thm:asymptotics} and throughout the paper we will let constants such as $C_1$ and $C_2$ be sufficiently large or small depending on the context in which they are used. \begin{remark}
	The statements of Theorem~\ref{thm:asymptotics} also hold for $k\geq 1/2+\delta$ for any $\delta>0$ in view of the results from \cite{BHS15}. However, the main interest of the asymptotic estimates is large $N$ and large $k$. We will therefore generally assume that $N$ and $k$ are large enough for various logarithms to be positive.
\end{remark}

Theorem~\ref{thm:asymptotics} is in agreement with the asymptotic behaviour of the constants appearing in the Keating--Snaith conjecture \eqref{eq:ksconj}. We also note that Harper \cite{Harper18} has very recently obtained similar results for the analogous moments on the line $\sigma=0$, 
\begin{equation}\label{eq:sigma0} 
	\lim_{T\to\infty} \frac{1}{T}\int_0^T |S_N \zeta(it)|^{2k}\,dt. 
\end{equation}
It should be made clear that the techniques used in the proof of Theorem~\ref{thm:asymptotics} are different from those of \cite{Harper18}. It is in fact easy to check that our proof does not work for $\sigma<1/2$. However, our techniques are quite flexible on the critical line and it is possible to extend Theorem~\ref{thm:asymptotics} to moments of other Dirichlet polynomials considered in \cite{BHS15}.

The key ingredient in the proof of Theorem~\ref{thm:asymptotics} is Weissler's inequality for Dirichlet polynomials (see \cite{Bayart02,Weissler80}). This inequality allows us to estimate non-integer pseudomoments through estimates for integer pseudomoments of homogeneous completely multiplicative twists. We will estimate these twisted integer moments by the theory for certain multiple Dirichlet series developed in \cite{BNP08,CG06,HL16} for the lower bound and using Rankin's trick for the upper bound. 

Indeed, for a non-negative real number $\varrho$ define 
\begin{equation}\label{eq:tmoment} 
	\mathcal{M}_{k,\varrho}(N) = \lim_{T\to\infty} \frac{1}{T}\int_0^T \left|\sum_{n=1}^N \frac{\varrho^{\Omega(n)}}{n^{1/2+it}}\right|^{2k}\,dt
\end{equation}
where $\Omega(n)$ denotes the number of prime factors of $n$, counting multiplicities. Then Weissler's inequality (see Section~\ref{sec:asymptotic}) gives that
\[\left(\mathcal{M}_{\lceil k \rceil,\alpha_k}(N)\right)^{\alpha_k^2} \leq \mathcal{M}_k(N) \leq \left(\mathcal{M}_{\lfloor k \rfloor,\beta_k}(N)\right)^{\beta_k^2}\]
where $\alpha_k = \sqrt{k/\lceil k \rceil}$ and $\beta_k = \sqrt{k/ \lfloor k \rfloor}$. 

In practice it is often useful to have smoother weights in the Dirichlet polynomial, especially if one is concerned with uniform asymptotics. We will therefore consider 
\begin{equation}\label{eq:twmoment} 
	\mathcal{S}_{k,\varrho}(N) = \lim_{T\to\infty} \frac{1}{T}\int_0^T \left|\sum_{n=1}^N \frac{\varrho^{\Omega(n)}}{n^{1/2+it}}\left(1-\frac{\log{n}}{\log{N}}\right)\right|^{2k}\,dt. 
\end{equation}
In Section~\ref{sec:asymptotic} we will use that $\mathcal{M}_{k,\varrho}(N)\geq \mathcal{S}_{k,\varrho}(N)$ to deduce the desired lower bound in Theorem~\ref{thm:asymptotics} from the following result.
\begin{theorem}\label{thm:twistedmoment} 
	Fix $\delta>0$. Uniformly for $0\leq\varrho \leq \sqrt{2-\delta}$ and positive integers $k = o(\sqrt{\log\log{N}})$ we have the asymptotic 
	\begin{align}
		\mathcal{S}_{k,\varrho}(N) &= a(k,\varrho)\gamma(k,\varrho)(\log{N})^{k^2 \varrho^2} + \mathcal{E}_{k,\varrho}(N) \nonumber \intertext{where the error term satisfies} \mathcal{E}_{k,\varrho}(N)&\ll(\log N)^{k^2\varrho^2-1/2}\exp\left(-k^2\varrho^2\log k-k^2\varrho^2\log\log k+O_\delta(k^2)\right). \nonumber \intertext{The constants are given by} a(k,\varrho) &= \prod_{p} \left(1-\frac{1}{p}\right)^{k^2 \varrho^2} \sum_{j=0}^\infty \frac{d_k^2(p^j) \varrho^{2j}}{p^j}, \nonumber \\
		\gamma(k,\varrho) &= \frac{1}{\Gamma(1+\varrho^2)^{k^2}}\int_{\mathcal{P}_{k,\varrho}}\prod_{i=1}^k \left(1-\sum_{j=1}^k x_{ij}^{1/\varrho^2}\right)\prod_{j=1}^k \left(1-\sum_{i=1}^k x_{ij}^{1/\varrho^2}\right)\,d\underline{x}, \nonumber \intertext{where $\mathcal{P}_{k,\varrho}$ denotes the twisted polytope} \mathcal{P}_{k,\varrho} &= \left\{(x_{ij}) \in \mathbb{R}^{k^2} \,:\, x_{ij}\geq0, \quad \sum_{i=1}^k x_{ij}^{1/\varrho^2} \leq 1, \quad \sum_{j=1}^k x_{ij}^{1/\varrho^2} \leq 1\right\}. 
	\label{eq:twistedpoly} \end{align}
\end{theorem}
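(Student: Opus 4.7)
The plan is to adapt the multiple Dirichlet-series approach of Conrey--Gamburd \cite{CG06} to incorporate the completely multiplicative twist $\varrho^{\Omega(n)}$, the smooth weight $1-\log n/\log N$, and uniform control in the parameter $k$. First, I would expand the $2k$th power of the Dirichlet polynomial and invoke orthogonality in $t$ to reduce $\mathcal{S}_{k,\varrho}(N)$ to a diagonal sum over tuples $(n_1,\ldots,n_k;m_1,\ldots,m_k)$ satisfying $n_i,m_j\leq N$ and $\prod n_i=\prod m_j$. Using the identity $(1-\log n/\log N)_+=\int_0^1\mathbf{1}_{n\leq N^u}\,du$, the smoothed moment can then be recast as a $2k$-fold integral over $[0,1]^{2k}$ of sharply truncated diagonal counts, shifting the analytic difficulty into a cleaner hard-cutoff setting.

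Next, I would apply Perron--Mellin inversion in $2k$ complex variables to express each sharply truncated count as a contour integral of the multiple Dirichlet series
\[
F(\underline{s},\underline{w})=\sum_{\prod n_i=\prod m_j}\frac{\varrho^{\Omega(\prod n_i m_j)}}{\prod n_i^{1/2+s_i}\prod m_j^{1/2+w_j}},
\]
whose Euler product has local factor at $p$ given by the phase integral
\[
F_p(\underline{s},\underline{w})=\frac{1}{2\pi}\int_{-\pi}^\pi\prod_i\bigl(1-\varrho e^{i\theta}p^{-1/2-s_i}\bigr)^{-1}\prod_j\bigl(1-\varrho e^{-i\theta}p^{-1/2-w_j}\bigr)^{-1}d\theta;
\]
convergence at $p=2$ is precisely where $\varrho\leq\sqrt{2-\delta}$ is needed. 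The crucial analytic input is the factorization $F(\underline{s},\underline{w})=A(\underline{s},\underline{w})\cdot\prod_{i,j}\zeta(1+s_i+w_j)^{\varrho^2}$, where the non-integer power is interpreted through the logarithm of the Euler product and $A$ is holomorphic in a polydisc about $\mathbf{0}$ with $A(\mathbf{0})=a(k,\varrho)$. Shifting all $2k$ contours past the hyperplanes $s_i+w_j=0$ to $\mre(s_i)=\mre(w_j)=c/\log N$, collecting the multidimensional residue, and integrating back over the $2k$ truncation parameters yields the main term $a(k,\varrho)\gamma(k,\varrho)(\log N)^{k^2\varrho^2}$; the factor $\Gamma(1+\varrho^2)^{-k^2}$ in $\gamma(k,\varrho)$ arises from $k^2$ applications of the Mellin formula $(2\pi i)^{-1}\int_{(c)}X^s s^{-1-\varrho^2}\,ds=(\log X)^{\varrho^2}/\Gamma(1+\varrho^2)$, while the twisted constraints $\sum x_{ij}^{1/\varrho^2}\leq 1$ defining $\mathcal{P}_{k,\varrho}$ emerge from the original size restrictions $n_i,m_j\leq N$ under the residue change of variables.

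The principal obstacle is bounding the remaining shifted-contour integrals uniformly in $k$ to achieve an error of the announced size $(\log N)^{k^2\varrho^2-1/2}\exp(-k^2\varrho^2\log k-k^2\varrho^2\log\log k+O_\delta(k^2))$. The $(\log N)^{-1/2}$ power saving should come from exploiting the smoothness of the weight, which replaces each Perron factor $X^s/s$ by $X^s/s^2$ and yields genuinely integrable behavior on shifted contours. The restriction $k=o(\sqrt{\log\log N})$ enters through the uniform bound for $F$ on contours of size $c/\log N$, where the arithmetic part $A$ carries an $\exp(O(k^2\log\log\log N))$-type overhead that must be absorbed into $\exp(O_\delta(k^2))$. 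Finally, matching the exponential prefactor $\exp(-k^2\varrho^2\log k-k^2\varrho^2\log\log k)$ in the error requires a uniform Stirling-type upper bound on the twisted polytope volume $\gamma(k,\varrho)$, analogous to the Conrey--Gamburd estimate for the standard polytope $\mathcal{P}_k$ but accounting for the modified constraints; this combinatorial estimate simultaneously controls the main term and the contributions absorbed into the error.
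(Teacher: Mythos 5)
The multiple-Dirichlet-series framework, the factorization $F=A\cdot\prod_{i,j}\zeta(1+s_i+s_{j+k})^{\varrho^2}$, and the eventual appearance of the $\Gamma$-function and the twisted polytope are all the right ingredients, and your reading of the analytic structure up through the factorization matches the paper. The gap is in the central step. You propose to shift the $2k$ contours and ``collect the multidimensional residue,'' which is the Conrey--Gamburd mechanism; but this relies on the factors $\zeta(1+s_i+s_{j+k})$ having genuine poles, i.e.\ on the exponent being a positive integer. For general $\varrho$ the exponent $\varrho^2$ is not an integer, so $\zeta(1+s_i+s_{j+k})^{\varrho^2}$ has a branch singularity along each hyperplane $s_i+s_{j+k}=0$, not a pole, and there is no residue to collect. (Your own contour specification $\mre(s_i)=\mre(w_j)=c/\log N>0$ in fact never crosses these hyperplanes, which is a further sign the residue step was not thought through.) The paper sidesteps this entirely: it never moves left of the singular set. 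Instead, after rescaling to $\mre(s_\ell)=k$ (a saddle choice), it truncates the contours at height $\mathcal{T}$, Taylor-expands $F_{k,\varrho}(\underline{s}/\mathcal{L})$ around the saddle to isolate $\mathcal{L}^{k^2\varrho^2}a(k,\varrho)\prod_{i,j}(s_i+s_{j+k})^{-\varrho^2}$, then re-extends and evaluates the resulting integral by writing each factor $(s_i+s_{j+k})^{-\varrho^2}$ via the $\Gamma$-integral and applying Perron's formula ``in reverse'' to land on the polytope integral. This is a genuinely different mechanism from residue calculus and is what makes the argument work for all $0\le\varrho\le\sqrt{2-\delta}$.

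Two further points. First, the $(\log N)^{-1/2}$ in the error does not come from ``smoothness of the weight'' per se; the smoothing only gives the Perron kernel $N^s/s^2$ needed for convergence. The power saving is the result of optimizing the truncation height, $\mathcal{T}=\sqrt{\mathcal{L}}$, balancing the truncation error $\propto 1/\mathcal{T}$ against the Taylor-approximation error on the truncated contour $\propto \mathcal{T}/\mathcal{L}$. Second, the Stirling-type estimate for $\gamma(k,\varrho)$ is not needed to establish the error bound in Theorem~\ref{thm:twistedmoment}; the bound for $\mathcal{E}_{k,\varrho}(N)$ comes from Lemma~\ref{lem:norton}/Lemma~\ref{lem:mdsfactor} and the saddle-point estimate $\mathcal{C}_{k,\varrho}(N)$. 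The polytope volume estimate (Lemma~\ref{lem:volumeest}) is only used afterwards, in deducing the lower bound of Theorem~\ref{thm:asymptotics}, to verify that the error is genuinely smaller than the main term.
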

\begin{remark}
	If we do not pursue uniform estimates and seek to investigate $\mathcal{M}_{k,\varrho}(N)$ directly, we mention without proof that for a fixed integer $k$ and fixed $0<\varrho<\sqrt{2}$ it is possible to deduce with our techniques that
	\[\lim_{N\to\infty} \frac{\mathcal{M}_{k,\varrho}(N)}{(\log{N})^{k^2\varrho^2}} = a(k,\varrho) \frac{\operatorname{Vol}(\mathcal{P}_{k,\varrho})}{\Gamma(1+\varrho^2)}.\]
	This is an extension of the main result of \cite{CG06}, which corresponds to the case $\varrho=1$, that might be of independent interest. Comparing the twisted polytope $\mathcal{P}_{k,\varrho}$ from \eqref{eq:twistedpoly} to the polytope $\mathcal{P}_k$ from \eqref{eq:Pk} we note the striking geometric effect of the parameter $\varrho$ on the faces of the polytope. 
\end{remark}

From the proof we will see that, in fact, the statement of Theorem~\ref{thm:twistedmoment} holds uniformly for $k^2=o(\sqrt{\log N})$ (and that this can almost certainly be improved, as can the factor of $1/\sqrt{\log N}$ in the error term). However, we have chosen to state it this way since, as we will see later, the main term is of size 
\[(\log N)^{k^2\varrho^2}\exp\left(-k^2\varrho^2\log k-k^2\varrho^2\log\log k+O_\delta(k^2)\right)\]
and so the result would fail to be an asymptotic if $k\geq C\sqrt{\log \log N}$ since the factor of $1/\sqrt{\log N}$ in the error term would be absorbed into $\exp(O_\delta(k^2))$. 

Let us next discuss what happens when $k\to\infty$ and $N$ is fixed. In analogy with \eqref{eq:lindelimit}, we therefore define
\[\mathcal{M}(N) = \lim_{k\to\infty} [\mathcal{M}_k(N)]^{1/(2k)}.\]
A result regarding norms of Dirichlet polynomials (see \cite[Sec.~2.3]{Bayart02}), which is a consequence of their almost periodicity, gives that this limit is equal to 
\begin{equation}\label{eq:truelimit} 
	\mathcal{M}(N) = \sup_{t\geq0} \left|S_N \zeta(1/2+it)\right| = \sum_{n=1}^N \frac{1}{\sqrt{n}} \sim 2\sqrt{N}. 
\end{equation}
Following \cite{FGH07}, we could insert the largest premitted value in the upper bound of Theorem~\ref{thm:asymptotics}, namely $k = C_1\log{N}/\log\log{N}$, to get the upper bound
\[[\mathcal{M}_k(N)]^{1/(2k)} \leq \exp\left(\left(C+o(1)\right)\frac{\log{N}}{\log\log{N}}\right)\]
for some positive constant $C$. However, this is too small compared to the true limit \eqref{eq:truelimit}. This means that the approach to the Lindel\"of conjecture through the Keating--Snaith conjecture discussed in \cite{FGH07} does not carry over to the pseudomoment setting. We are lead to consider the following.
\begin{problem}
	Determine the smallest $k=k(N)$ such that $[\mathcal{M}_k(N)]^{1/2k} \gg \sqrt{N}$. 
\end{problem}

This problem is the final topic of the present paper. By the discussion above, $k = C_1\log{N}/\log\log{N}$ is certainly too small. We will demonstrate that for a general Dirichlet polynomial $f(s) = \sum_{n=1}^N a_n n^{-s}$, the optimal $k$ is $\pi(N)$. For the partial sums $S_N\zeta(1/2+s)$, we can do much better, but we have been unable to resolve the problem. Specifically, we will show that $k= N^\varepsilon$ is sufficient for every $\varepsilon>0$. 

For these arguments, we need to estimate expressions such as 
\begin{equation}\label{eq:limitmeasure} 
	\lim_{T\to\infty} \frac{1}{T}\operatorname{meas}\left(\left\{t \in [0,T] \,:\, \left|\sum_{n=1}^N n^{-1/2-it}\right| \geq \lambda \right\}\right) 
\end{equation}
for large $\lambda$. Our approach is to use an old insight of H. Bohr to translate \eqref{eq:limitmeasure} to the polytorus $\mathbb{T}^d$, for $d=\pi(N)$. Here we will apply a version of Bernstein's inequality for trigonometric polynomials in several variables \cite{QQ13}, Khintchine's inequality \cite{KK01} and estimates for smooth numbers \cite{Granville02}.

\subsection*{Organization} The present paper contains four additional sections. The next section contains some preliminary estimates needed for the proof of Theorem~\ref{thm:twistedmoment}. This proof can be found in Section~\ref{sec:twisted}. Section~\ref{sec:asymptotic} is devoted to the proof of Theorem~\ref{thm:asymptotics}. Finally, in Section~\ref{sec:polytorus} some results on norm comparison for Dirichlet polynomials are obtained.

\section{Preliminary estimates} \label{sec:est}
Our starting point is to expand the square and integrate in the right hand side of \eqref{eq:tmoment} to obtain 
\begin{equation}\label{eq:pseudosum} 
	\mathcal{M}_{k,\varrho}(N)=\sum_{\substack{n_1\cdots n_k=\\n_{k+1}\cdots n_{2k}\\n_j\leq N}}\frac{\varrho^{\Omega(n_1)}\cdots \varrho^{\Omega(n_{2k})}}{(n_1\cdots n_{2k})^{1/2}}. 
\end{equation}
Consider the associated multiple Dirichlet series 
\begin{equation}\label{eq:mds} 
	F_{k,\varrho}(\underline{s})=F_{k,\varrho}(s_1,\ldots,s_{2k})=\sum_{\substack{n_1\cdots n_k= \\n_{k+1}\cdots n_{2k}\\
	n_j\geq 1}} \frac{\varrho^{\Omega(n_1)}\cdots \varrho^{\Omega(n_{2k})}} {n_1^{1/2+s_1}\cdots n_{2k}^{1/2+s_{2k}}}. 
\end{equation}
In preparation for the proof of Theorem~\ref{thm:twistedmoment} in the next section, we will compile some preliminary results and estimates for the Dirichlet series $F_{k,\varrho}$ from \eqref{eq:mds}. Our first lemma relies on a result from \cite{Norton92}. 

\begin{lemma}\label{lem:norton} 
	Fix $\delta>0$. Uniformly for $0\leq \varrho \leq \sqrt{2-\delta}$ and $0<\sigma \leq 1/\log{k}$ it holds that
	\[|F_{k,\varrho}(\underline{s})| \leq \exp\left(-k^2 \varrho^2\left(\log{2\sigma}+\log\log{k^2\varrho^2}+O_\delta(1)\right)\right),\]
	if $s_\ell = \sigma + i t_\ell$ for $1 \leq \ell \leq 2k$.
\end{lemma}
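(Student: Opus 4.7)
The proof has three stages. First, since the coefficients of $F_{k,\varrho}$ in \eqref{eq:mds} are non-negative and $\mre(s_\ell) = \sigma$ for every $\ell$, the triangle inequality gives $|F_{k,\varrho}(\underline{s})| \leq F_{k,\varrho}(\sigma,\ldots,\sigma)$. The diagonal constraint $n_1\cdots n_k = n_{k+1}\cdots n_{2k}$ is multiplicative, so we have the Euler factorization
\[F_{k,\varrho}(\sigma,\ldots,\sigma) = \prod_p E_p(\sigma), \qquad E_p(\sigma) = \sum_{j=0}^\infty d_k(p^j)^2 \frac{\varrho^{2j}}{p^{j(1+2\sigma)}},\]
with $d_k(p^j) = \binom{j+k-1}{j}$, and the task reduces to estimating this product.

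Second, I would isolate the divergent $\zeta$-factor by writing $\prod_p E_p(\sigma) = \zeta(1+2\sigma)^{k^2\varrho^2} \cdot B_\sigma(k,\varrho)$, where
\[B_\sigma(k,\varrho) = \prod_p E_p(\sigma)\bigl(1 - p^{-(1+2\sigma)}\bigr)^{k^2\varrho^2}.\]
For primes with $p^{1+2\sigma} > C k^2 \varrho^2$, Taylor expansion yields a local factor of the form $1 + O(k^4\varrho^4/p^{2(1+2\sigma)})$, contributing only $\exp(O_\delta(k^2\varrho^2))$ in aggregate. For smaller primes the hypothesis $\varrho^2 \leq 2-\delta$ becomes essential: writing $\sum_j d_k(p^j)^2 y^j = {}_2F_1(k,k;1;y)$ and noting $y = \varrho^2/p^{1+2\sigma} \leq 1 - \delta/2$, each local factor is bounded in terms of $k$ and $\delta$, and the $\varrho$-twisted version of \cite{Norton92}'s asymptotic for $a(k,\varrho)$ delivers
\[B_\sigma(k,\varrho) \leq \exp\bigl(-k^2\varrho^2 \log\log(k^2\varrho^2) + O_\delta(k^2\varrho^2)\bigr)\]
uniformly in $0 < \sigma \leq 1/\log k$; the uniformity in $\sigma$ is inherited from $p^{2\sigma} = O(1)$ valid for $p \leq k^{O(1)}$ in that range. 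Finally, combining with the elementary $\zeta(1+2\sigma) \leq (2\sigma)^{-1}(1 + O(\sigma))$ and the observation that $\sigma k^2\varrho^2 = O(k^2/\log k)$ yields $\zeta(1+2\sigma)^{k^2\varrho^2} \leq (2\sigma)^{-k^2\varrho^2}\exp(O(k^2))$, and the product of the two bounds gives the lemma.

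\textbf{Main obstacle.} The principal difficulty is the uniform application of Norton's bound across $\varrho \in [0, \sqrt{2-\delta}]$. The restriction $\varrho^2 \leq 2 - \delta$ is what prevents the Euler factor at $p = 2$ from blowing up: as $y = \varrho^2/2^{1+2\sigma}$ approaches $1$, the hypergeometric asymptotic ${}_2F_1(k,k;1;y) \sim \frac{\Gamma(2k-1)}{\Gamma(k)^2}(1-y)^{1-2k}$ forces the implied constant in $O_\delta(1)$ to grow like a negative power of $\delta$, and also drives a contribution of order $k\log(1/\delta)$ to $\log E_2(\sigma)$ that must be absorbed into $O_\delta(k^2)$. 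A secondary but still delicate point is transferring Norton's estimate, which is natural at $\sigma = 0$, to general $\sigma \in (0, 1/\log k]$, for which one relies on $p^{2\sigma}$ being uniformly bounded across the relevant primes.
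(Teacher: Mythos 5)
Your plan is sound and reaches the same bound, but it packages the key estimate differently from the paper. The paper does not factor out $\zeta(1+2\sigma)^{k^2\varrho^2}$; instead it bounds the full Euler product by splitting at $p=k^2\varrho^2$, uses $(1-\varrho/\sqrt{p})^{-2k}$ for the small primes, bounds the tail by $\prod_{p>k^2\varrho^2}(1-\varrho^2/p^{1+2\sigma})^{-k^2}$, and then cites Norton's explicit formula
\[\sum_{p>y}\frac{1}{p^{1+2\sigma}} = -\log 2\sigma - \log\log y - \gamma + O(\sigma\log y)+O(\sigma^2\log^2 y)+O(1/\log y),\]
which delivers both the $-\log 2\sigma$ and the $-\log\log(k^2\varrho^2)$ contributions in a single stroke. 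Your route gets $-\log 2\sigma$ from $\zeta(1+2\sigma)\sim(2\sigma)^{-1}$ and $-\log\log(k^2\varrho^2)$ from the truncated Mertens product $\prod_{p\leq k^2\varrho^2}(1-p^{-(1+2\sigma)})^{k^2\varrho^2}$, which is a valid rearrangement of the same underlying prime sums. Two points need tightening: (i) your appeal to a ``$\varrho$-twisted version of Norton's asymptotic for $a(k,\varrho)$'' conflates the paper's Lemma 2.2 (a Mertens-based estimate of $a(k,\varrho)=B_0(k,\varrho)$, making no reference to Norton) with Norton's actual result about $\sum_{p>y}p^{-(1+2\sigma)}$; what you really need for $B_\sigma$ with $\sigma>0$ is the truncated-Mertens estimate $\sum_{p\leq y}p^{-(1+2\sigma)}\geq\log\log y-O(1)$, which follows from $\sum_{p\leq y}p^{-1}-\sum_{p\leq y}p^{-(1+2\sigma)}\leq 2\sigma\sum_{p\leq y}(\log p)/p\ll\sigma\log y\ll1$ when $\sigma\leq1/\log k$ and $y\asymp k^2\varrho^2$; and (ii) ``each local factor is bounded in terms of $k$ and $\delta$'' understates what is needed — the factor at $p$ is $\exp(O_\delta(k/\sqrt{p}))$, and it is the decay in $p$ (summed via the prime number theorem to $O_\delta(k^2\varrho^2/\log k)$) that keeps the small-prime block inside the $\exp(O_\delta(k^2\varrho^2))$ budget, not merely pointwise boundedness. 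With those two clarifications your decomposition gives the lemma, and the discussion of the hypergeometric blow-up at $p=2$ correctly identifies where the $\delta$-dependence enters.
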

\begin{proof}
	Since $F_{k,\varrho}(\underline{s})$ has positive coefficients, the maximum is attained for $t_\ell=0$. Using that $\sigma_\ell = \sigma$, we find that 
	\begin{multline*}
		|F_{k,\varrho}(\underline{s})| \leq \sum_{\substack{n_1\cdots n_k=\\n_{k+1}\cdots n_{2k}\\
		n_j\geq 1}} \frac{\varrho^{\Omega(n_1)}\cdots \varrho^{\Omega(n_{2k})}}{n_1^{1/2+\sigma}\cdots n_{2k}^{1/2+\sigma}} \\
		= \sum_{n=1}^\infty \frac{d_k^2(n)\varrho^{2\Omega(n)}}{n^{1+2\sigma}} = \prod_p \left(\sum_{j=0}^\infty \frac{d_k^2(p^j) \varrho^{2j}}{p^{(1+2\sigma)j}}\right) 
	\end{multline*}
	We will split the Euler product at $k^2 \varrho^2$. For the small primes, we first use that $\sigma\geq0$ and estimate roughly to find that 
	\begin{align*}
		\prod_{p\leq k^2 \varrho^2} \left(\sum_{j=0}^\infty \frac{d_k^2(p^j) \varrho^{2j}}{p^{j}}\right) &\leq \prod_{p\leq k^2 \varrho^2} \left(\sum_{j=0}^\infty \frac{d_k(p^j) \varrho^j }{p^{j/2}}\right)^2 =\prod_{p\leq k^2 \varrho^2} \left(1-\frac{\varrho}{\sqrt{p}}\right)^{-2k} \\
		&\leq\exp\left(2k C_\delta \sum_{p\leq k^2 \varrho^2} \frac{\varrho}{\sqrt{p}}\right) = \exp\left(O_\delta\left(\frac{k^2 \varrho^2}{\log{k}}\right)\right), 
	\end{align*}
	by the prime number theorem. For the large primes, we use $d_k^2(p^j) \leq d_{k^2}(p^j)$ and the estimate $-\ln(1-x) \leq x + O(x^2)$, for, say $0\leq x \leq 2/3$, to achieve 
	\begin{multline*}
		\prod_{p> k^2 \varrho^2} \left(\sum_{j=0}^\infty \frac{d_k^2(p^j) \varrho^{2j}}{p^{(1+2\sigma)j}}\right) \leq \prod_{p>k^2\varrho^2} \left(1-\frac{\varrho^2}{p^{(1+2\sigma)}}\right)^{-k^2} \\
		=\exp\left(k^2 \varrho^2 \sum_{p>k^2 \varrho^2} \frac{1}{p^{1+2\sigma}} + O(1)\right). 
	\end{multline*}
	We now put into play the following estimate (see~\cite[Lem.~3.12]{Norton92}). Uniformly for $\sigma>0$ and $y\geq2$, it holds that
	\[\sum_{p>y} \frac{1}{p^{1+2\sigma}} = -\log{2\sigma}-\log\log{y} - \gamma + O(\sigma\log y) + O(\sigma^2\log^2 y) + O\left(\frac{1}{\log{y}}\right).\]
	We apply this estimate with $y = k^2 \varrho^2$ and since $\sigma \leq 1/\log{k}$ we get that
	\[k^2 \varrho^2 \sum_{p>k^2 \varrho^2} \frac{1}{p^{1+2\sigma}} = -k^2 \varrho^2\left(\log{2\sigma}+ \log\log{k^2\varrho^2} + O(1)\right),\]
	which completes the proof. 
\end{proof}

Lemma~\ref{lem:norton} will also be used in the proof of the upper bound in Theorem~\ref{thm:asymptotics} found in Section~\ref{sec:asymptotic}. Let us now factor out zeta functions from $F_{k,\varrho}$ and estimate the arithmetic factor $a(k,\varrho)$.

\begin{lemma}\label{lem:mdsfactor} 
	Let $F_{k,\varrho}$ be as in \eqref{eq:mds} for some $0\leq\varrho<\sqrt{2}$ and suppose that $\mre(s_i+s_{j+k})>0$ for $1\leq i,j\leq k$. Then
	\[F_{k,\varrho}(\underline{s})=A_{k,\varrho}(\underline{s})\prod_{i,j=1}^k\zeta(1+s_i+s_{j+k})^{\varrho^2}\]
	where
	\[A_{k,\varrho}(\underline{s}) =\prod_p\prod_{i,j=1}^k\bigg(1-\frac{1}{p^{1+s_i+s_{j+k}}}\bigg)^{\varrho^2} \sum_{\substack{m_1+\cdots+ m_k=\\m_{k+1}+\cdots+ m_{2k}\\
	m_j\geq 0}} \frac{\varrho^{m_1+\cdots+m_{2k}}}{p^{m_1(\frac{1}{2}+s_1)+\cdots +m_{2k}(\frac{1}{2}+s_{2k})}}.\]
	The product is absolutely convergent if $\mre(s_i+s_{j+k})>-1/2$ for $1 \leq i,j \leq k$, and in particular
	\begin{equation} \label{eq:Akrho}
		A_{k,\varrho}(\underline{0}) = \prod_{p} \left(1-\frac{1}{p}\right)^{k^2 \varrho^2} \sum_{j=0}^\infty \frac{d_k^2(p^j) \varrho^{2j}}{p^j} = a(k,\varrho).
	\end{equation}
	Fix $\delta>0$. Uniformly for $0\leq \varrho \leq \sqrt{2-\delta}$ we have that
	\[a(k,\varrho)=\exp\left(-k^2\varrho^2\log(2e^\gamma\log{k\varrho})+O_\delta\left(\frac{k^2}{\log{k}}\right)\right)\]
	as $k\to\infty$.
\end{lemma}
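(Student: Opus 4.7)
My plan is to first establish the factorisation and the identity $A_{k,\varrho}(\underline{0}) = a(k,\varrho)$ by routine Euler product manipulations, and then derive the asymptotic for $a(k,\varrho)$ by splitting its Euler product at $y = k^2\varrho^2$, treating large primes by a Taylor expansion and small primes by Mertens' theorem combined with a Cauchy--Schwarz-type inequality.

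For the factorisation, writing $n_\ell = \prod_p p^{m_{\ell,p}}$ decouples the condition $n_1\cdots n_k = n_{k+1}\cdots n_{2k}$ into the per-prime conditions $\sum_{i\leq k}m_{i,p} = \sum_{i>k}m_{i,p}$, so $F_{k,\varrho}(\underline{s}) = \prod_p F_p(\underline{s})$ with $F_p$ the inner sum appearing in the statement. Dividing by the Euler product $\prod_p\prod_{i,j}(1-p^{-1-s_i-s_{j+k}})^{-\varrho^2} = \prod_{i,j}\zeta(1+s_i+s_{j+k})^{\varrho^2}$ produces the formula for $A_{k,\varrho}$. Expanding each local factor of $A_{k,\varrho}$ in nonnegative powers of $p^{-1-s_i-s_{j+k}}$, the constant term is $1$, and the linear contribution $\varrho^2\sum_{i,j}p^{-1-s_i-s_{j+k}}$ from $F_p$ cancels with the corresponding linear term from $\prod_{i,j}(1-p^{-1-s_i-s_{j+k}})^{\varrho^2}$. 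Hence the local factor is $1+O(p^{-2-2\sigma_0})$ with $\sigma_0 = \min_{i,j}\mre(s_i+s_{j+k})$, which gives absolute convergence for $\sigma_0 > -1/2$. At $\underline{s}=\underline{0}$, the tuples $\underline m$ with $\sum_{i\leq k}m_i = \sum_{i>k}m_i = j$ split as two independent copies of $d_k(p^j)$, producing $A_{k,\varrho}(\underline{0}) = a(k,\varrho)$.

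For the asymptotic, set $E_p = (1-1/p)^{k^2\varrho^2}\sum_{j\geq 0} d_k(p^j)^2(\varrho^2/p)^j$ and split the product at $y = k^2\varrho^2$. For $p > y$, Taylor expansion of both factors of $E_p$ shows that the $p^{-1}$ terms cancel exactly and the $p^{-2}$ terms are of order $k^4\varrho^4$, so $\log E_p = O(k^4\varrho^4/p^2)$; the prime number theorem then gives $\sum_{p > y}\log E_p \ll k^4\varrho^4/(y\log y) = O(k^2/\log k)$. For $p \leq y$ I will use the two-sided bound
\[k^2\varrho^2\log(1-1/p) \;\leq\; \log E_p \;\leq\; k^2\varrho^2\log(1-1/p) + 2k\log(1-\varrho/\sqrt p)^{-1},\]
whose lower bound comes from keeping only the $j=0$ term in the inner sum and whose upper bound uses the key inequality
\[\sum_{j\geq 0}d_k(p^j)^2(\varrho^2/p)^j \;\leq\; \Bigl(\sum_{j\geq 0}d_k(p^j)(\varrho/\sqrt p)^j\Bigr)^2 = (1-\varrho/\sqrt p)^{-2k},\]
a consequence of $(\sum b_j)^2 \geq \sum b_j^2$ for nonnegative $b_j$; the hypothesis $\varrho \leq \sqrt{2-\delta}$ ensures $\varrho/\sqrt p$ stays bounded away from $1$ for every $p\geq 2$. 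Summing the lower bound over $p \leq y$ via Mertens' theorem yields $-k^2\varrho^2(\gamma + \log\log y) + O(k^2/\log k)$, while the extra term in the upper bound is controlled by $-\log(1-x) \ll_\delta x$ and $\sum_{p\leq y}p^{-1/2} \ll \sqrt y/\log y$, contributing only $O_\delta(k\varrho\sqrt y/\log y) = O_\delta(k^2/\log k)$. Combining the pieces with $\log\log(k^2\varrho^2) = \log 2 + \log\log(k\varrho)$ gives the stated asymptotic.

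The delicate step is the upper bound on $\sum_j d_k(p^j)^2(\varrho^2/p)^j$: the naive inequality $d_k(p^j)^2 \leq d_{k^2}(p^j)$ would substitute $(1-\varrho^2/p)^{-k^2}$ on the right, and its Mertens sum cancels the intended main term exactly, leaving an $O(k^2)$ residue instead of $O(k^2/\log k)$. The Cauchy--Schwarz-type bound above, which uses $\varrho/\sqrt p$ rather than $\varrho^2/p$, is what preserves the leading asymptotic with the correct error term.
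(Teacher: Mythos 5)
Your proposal is correct and follows essentially the same route as the paper: the factorisation is the standard per-prime decoupling, and the asymptotic for $a(k,\varrho)$ is obtained by splitting the Euler product at $y=k^2\varrho^2$, applying Mertens to $(1-1/p)^{k^2\varrho^2}$, using the Cauchy--Schwarz-type bound $\sum_j d_k(p^j)^2(\varrho^2/p)^j \leq (1-\varrho/\sqrt{p})^{-2k}$ for small primes, and exploiting first-order cancellation for large primes. Your closing remark is exactly why the paper reserves $d_k^2\leq d_{k^2}$ for the tail $p>k^2\varrho^2$ (where the linear terms cancel) and uses the Cauchy--Schwarz bound below the cutoff.
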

\begin{proof}
	The first statement about the factorization into Euler products is standard and we omit the details of the proof (see e.g.~\cite{CG06,HL16}). 
	
	For the second statement about the asymptotics of $a(k,\varrho)$, we split the Euler product \eqref{eq:Akrho} into two parts as in the proof of Lemma~\ref{lem:norton}. We first consider $p \leq k^2 \varrho^2$ and apply Mertens' third theorem to the effect that 
	\begin{align*}
		\prod_{p\leq k^2 \varrho^2 } \left(1-\frac{1}{p}\right)^{k^2 \varrho^2} &= \left(\frac{e^{-\gamma}}{\log(k^2 \varrho^2)}\right)^{k^2 \varrho^2 }\left(1 + O \left(\frac{1}{\log(k^2 \varrho^2 )}\right)\right)^{k^2\varrho^2} \\
		&= \exp\left(-k^2 \varrho^2 \log\left(2e^{\gamma}\log(k\varrho)\right)+ O\left(\frac{k^2}{\log{k}}\right)\right). 
	\end{align*}
	For the other factor, we recall from the proof of Lemma~\ref{lem:norton} that
	\[\prod_{p\leq k^2 \varrho^2}\left(\sum_{j=0}^\infty \frac{d^2_k(p^j) \varrho^{2j}}{p^j}\right) \leq \exp\left(O_\delta\left(\frac{k^2}{\log{k}}\right)\right).\]
	When $p>k^2 \varrho^2$, we again use that $d_k^2(p^j) \leq d_{k^2}(p^j)$ to obtain the estimate 
	\begin{multline*}
		\log \prod_{p> k^2 \varrho^2} \left(1-\frac{1}{p}\right)^{k^2\varrho^2} \sum_{j=0}^\infty \frac{d_k^2(p^j) \varrho^{2j}}{p^j} \\
		\leq k^2 \sum_{p>k^2 \varrho^2}\left(\varrho^2\log\left(1-\frac{1}{p}\right)- \log\left(1-\frac{\varrho^2}{p}\right)\right) \\
		\ll k^2\sum_{p>k^2 \varrho^2} \frac{1}{p^2} \ll 1. 
	\end{multline*}
	The proof is completed by combining the three estimates. 
\end{proof}

Lemma~\ref{lem:mdsfactor} allows us to extract the behaviour of $F_{k,\varrho}(\underline{s})$ near $\underline{s}=0$ by estimating the Euler product $A_{k,\varrho}$ and the double product of zeta functions separately. We begin with the latter, which is straightforward.

\begin{lemma}\label{lem:zetaprod}
	Suppose that $\mre(s_i+s_{j+k})>0$ for $1\leq i,j\leq k$ and set
	\[S = \max_{1 \leq \ell \leq 2k} |s_\ell|.\]
	If $Sk^2 = o(1)$ and $\varrho\geq0$, then 
	\[\prod_{i,j=1}^k\zeta(1+s_i+s_{j+k})^{\varrho^2} = \left(1 + O_{\varrho}\left(S k^2  \right)\right)\prod_{i,j=1}^{k} \frac{1}{(s_i+{s_{j+k}})^{\varrho^2}}.\]
\end{lemma}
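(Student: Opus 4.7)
The plan is to reduce everything to the Laurent expansion of $\zeta$ at its pole. Recall that near $s=1$ we have $\zeta(1+w) = w^{-1} + \gamma + O(|w|)$, which we rewrite as
\[
\zeta(1+w) = \frac{1}{w}\bigl(1 + \gamma w + O(|w|^2)\bigr).
\]
For $|w|$ small enough (how small depending only on $\varrho$) we can take the principal branch of $(\cdot)^{\varrho^2}$ of the factor in parentheses and expand to get
\[
\zeta(1+w)^{\varrho^2} = \frac{1}{w^{\varrho^2}}\bigl(1 + O_\varrho(|w|)\bigr).
\]

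Now apply this with $w = w_{ij} := s_i + s_{j+k}$ for $1\leq i,j\leq k$. By the hypothesis $\mre(s_i+s_{j+k})>0$ the values $\zeta(1+w_{ij})$ are well-defined, and since $|w_{ij}|\leq 2S$ with $Sk^2=o(1)$, the expansion above applies uniformly. Taking the product,
\[
\prod_{i,j=1}^k \zeta(1+s_i+s_{j+k})^{\varrho^2} = \prod_{i,j=1}^k \frac{1}{(s_i+s_{j+k})^{\varrho^2}}\cdot \prod_{i,j=1}^k\bigl(1+O_\varrho(|w_{ij}|)\bigr).
\]

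For the error product, since $Sk^2=o(1)$ each factor is $1+O_\varrho(S)$ and well within the radius of convergence of $\log$. Taking logarithms and summing,
\[
\log \prod_{i,j=1}^k \bigl(1+O_\varrho(|w_{ij}|)\bigr) = \sum_{i,j=1}^k O_\varrho(|w_{ij}|) = O_\varrho(Sk^2),
\]
and exponentiating gives $1 + O_\varrho(Sk^2)$ under the assumption $Sk^2=o(1)$. Combining yields the claim. There is no real obstacle here: the lemma is essentially a bookkeeping exercise with the Laurent expansion of $\zeta$, and the only mild care needed is that $Sk^2=o(1)$ both forces the individual $|w_{ij}|$ small enough to use the expansion and controls the total $k^2$-fold accumulation of errors.
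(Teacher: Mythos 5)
Your proof is correct and takes essentially the same approach as the paper: expand $\zeta(1+w)^{\varrho^2}$ near $w=0$ as $w^{-\varrho^2}\bigl(1+O_\varrho(|w|)\bigr)$, take the product over the $k^2$ pairs $(i,j)$, and use the hypothesis $Sk^2=o(1)$ to collapse the accumulated error $\bigl(1+O_\varrho(S)\bigr)^{k^2}$ into $1+O_\varrho(Sk^2)$. The paper's writeup is terser (it does not spell out the branch choice or the log-and-exponentiate step), but the argument is the same.
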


\begin{proof}
	For each zeta function in the double product we apply the expansion
	\[\zeta(1+s)^{\varrho^2}=\frac{1}{s^{\varrho^2}}\left(1+O_{\varrho}(|s|)\right).\]
	Since $s = s_{i}+s_{j+k} \ll S$, we get that
	\[\prod_{i,j=1}^k\zeta(1+s_i+s_{j+k})^{\varrho^2} = \left(1 + O_{\varrho}(S)\right)^{k^2}\prod_{i,j=1}^{k} \frac{1}{(s_i+{s_{j+k}})^{\varrho^2}}\]
	and by the assumption $Sk^2 = o(1)$ we complete the proof.
\end{proof}

The next lemma is the most technical part in the proof of Theorem~\ref{thm:twistedmoment}, and also the part of the argument which forces the restriction $k = o(\sqrt{\log\log{N}})$.
\begin{lemma} \label{lem:Akrhoest}
	Suppose that $\mre(s_i+s_{j+k})\geq0$ for $1 \leq i,j \leq k$ and set
	\[S = \max_{1 \leq \ell \leq 2k} |s_\ell|.\]
	Fix $\delta>0$. If $S \leq 1/\log{k}$ and $0\leq \varrho \leq \sqrt{2-\delta}$ there is a constant $\Sigma_\delta$ so that 
	\[A_{k,\varrho}(\underline{s}) = a(k,\varrho)\left(1+ O\left(S  e^{\Sigma_\delta k^2}\right)\right).\]
\end{lemma}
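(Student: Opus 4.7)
The plan is to use the Euler product
\[
\frac{A_{k,\varrho}(\underline{s})}{a(k,\varrho)} - 1 = \prod_p \frac{L_p(\underline{s})}{L_p(\underline{0})} - 1,
\]
where $L_p$ denotes the local factor at $p$, and to apply the elementary inequality $|\prod_p(1+x_p) - 1| \leq \exp(\sum_p|x_p|) - 1$. It will suffice to show $\sum_p|L_p(\underline{s})/L_p(\underline{0}) - 1| \ll S e^{\Sigma_\delta k^2}$, since then $|e^x-1| \leq 2|x|$ yields the lemma whenever the right-hand side is $O(1)$. In the complementary regime the bound is essentially trivial, provided one separately establishes the uniform estimate $|A_{k,\varrho}(\underline{s})| \ll a(k,\varrho) e^{\Sigma_\delta k^2}$; this in turn follows from Lemma~\ref{lem:norton} via the factorization $A = F / \prod \zeta$ together with the standard lower bound $|\zeta(1+w)| \geq 1/(2|w|)$ for small $|w|$.

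To bound each local ratio I factor $L_p(\underline{s})/L_p(\underline{0}) = Z_p(\underline{s}) T_p(\underline{s})$, where $Z_p$ is the zeta-factor ratio and $T_p(\underline{s}) = S_p(\underline{s})/S_p(\underline{0}) = \mathbb{E}_q[p^{-\sum_\ell m_\ell s_\ell}]$ with $q_m \propto \varrho^{|m|} p^{-|m|/2}$ the probability measure on constrained tuples $m$. For primes $p > c k^2 \varrho^2$, a first-order Taylor expansion of $\log Z_p$ and $\log T_p$ at $\underline{s} = \underline{0}$ yields linear contributions $+\varrho^2 k \log p \cdot (\sum_\ell s_\ell)/(p-1)$ and $-\log p \cdot \sum_\ell s_\ell \cdot \mathbb{E}_q[m_\ell]$ respectively. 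The crucial cancellation occurs because a direct computation using the hypergeometric identity $S_p(\underline{0}) = {}_2F_1(k,k;1;\varrho^2/p)$ gives $\mathbb{E}_q[m_\ell] = k\varrho^2/p + O(k^3/p^2)$, matching the zeta-factor leading term. After this cancellation, $|L_p(\underline{s})/L_p(\underline{0}) - 1| \ll S k^3 \log p / p^2$ for such primes, and summing contributes $O(S k)$.

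The small primes $p \leq c k^2 \varrho^2$ are the technical heart of the argument, and the hypothesis $\mre(s_i + s_{j+k}) \geq 0$ becomes essential here. Its key consequence is that $\sum_\ell m_\ell \mre(s_\ell) \geq 0$ for every constrained $m$; this follows by decomposing the bilateral sum as an average over bipartite slot-pairings, each contributing a term of the form $\mre(s_i + s_{j+k}) \geq 0$. This positivity gives the uniform bounds $|S_p(\underline{s})| \leq S_p(\underline{0})$, $|p^{-\sum m_\ell s_\ell}| \leq 1$, and $|p^{-1-s_i-s_{j+k}}| \leq p^{-1}$. Combined with the estimate $\mathbb{E}_q[|m|] \ll_\delta k$ (again extracted from the hypergeometric form of $S_p(\underline{0})$ and the inequality $\varrho^2/p \leq 1 - \delta/2$), these yield a derivative bound $|\partial_\ell L_p(t\underline{s})| \ll (1+p^{-1})^{k^2 \varrho^2} \cdot C_\delta k \log p \cdot S_p(\underline{0})$ valid for all $t \in [0,1]$. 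Integrating along the segment from $\underline{0}$ to $\underline{s}$ and summing over the small primes gives a contribution $\ll S k^2 \exp\bigl(k^2 \varrho^2 \sum_p \log[(1+p^{-1})/(1-p^{-1})]\bigr) \ll S e^{\Sigma_\delta k^2}$. The hard part is precisely this small-prime analysis: the Taylor expansion of $\log T_p$ does not converge term-by-term since $S \cdot |m| \cdot \log p$ can be of order $k$, so the positivity consequence of the hypothesis must serve as a surrogate for a lower bound on $|S_p(\underline{s})|$, which is otherwise unavailable when $\varrho/\sqrt{p}$ is close to $1$.
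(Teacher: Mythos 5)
Your overall strategy is genuinely different from the paper's: you compare the Euler factors one prime at a time, bounding $\sum_p|L_p(\underline{s})/L_p(\underline{0})-1|$, while the paper differentiates $\log A_{k,\varrho}$ globally and then bounds $|A_{k,\varrho}(\underline{s})/A_{k,\varrho}(\underline{0})|$. Both use the same two key ideas (first-order cancellation between the zeta factor and the combinatorial sum at large primes; positivity from $\mre(s_i+s_{j+k})\geq 0$ at small primes), and your bijection argument for $\sum_\ell m_\ell\mre(s_\ell)\geq0$ is a clean way to state the positivity consequence. However, there are two concrete gaps.

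First, the complementary regime. You note that when $\sum_p|L_p/L_p(0)-1|\gtrsim 1$ you need the separate bound $|A_{k,\varrho}(\underline{s})|\ll a(k,\varrho)e^{\Sigma_\delta k^2}$, and you propose to obtain it from Lemma~\ref{lem:norton} via $A=F/\prod\zeta$ and a pointwise lower bound on $\zeta$. But Lemma~\ref{lem:norton} is proved under the hypothesis $\mre s_\ell=\sigma$ for every $\ell$, and the present hypothesis $\mre(s_i+s_{j+k})\geq 0$ does not force the real parts to be equal (or even individually nonnegative), so the cited estimate is not applicable. This bound is in fact the technical heart of the paper's proof (its parts I--IV), and proving it requires more than positivity: the \emph{trivial} small-prime bound $|L_p(\underline{s})/L_p(\underline{0})|\leq\bigl(\tfrac{1+p^{-1}}{1-p^{-1}}\bigr)^{k^2\varrho^2}$, multiplied over $p\leq 2k^2$, gives only $\exp\bigl(2k^2\varrho^2\log\log k+O(k^2)\bigr)$, which is not $e^{O_\delta(k^2)}$. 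One must Taylor-expand the zeta-factor ratio $\prod_{i,j}(1-p^{-1-s_i-s_{j+k}})^{\varrho^2}/(1-p^{-1})^{k^2\varrho^2}$ and exploit $S\leq 1/\log k$ to replace $1/p$ by $CS\log p/p$ in the exponent; only then does the product over $p\leq 2k^2$ come out to $e^{O(k^2)}$. Your proof never performs this expansion.

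Second, the displayed small-prime estimate
\[
\ll S\,k^2\exp\Bigl(k^2\varrho^2\sum_p\log\frac{1+p^{-1}}{1-p^{-1}}\Bigr)\ll S\,e^{\Sigma_\delta k^2}
\]
does not parse: the middle expression is a \emph{product} over primes, which either diverges or, if truncated at $p\leq ck^2\varrho^2$, equals $\exp\bigl(2k^2\varrho^2\log\log k+O(k^2)\bigr)$, contradicting the final bound. What you presumably intend is the \emph{sum} $\sum_{p\leq ck^2\varrho^2}Sk^2\log p\,\bigl(\tfrac{1+p^{-1}}{1-p^{-1}}\bigr)^{k^2\varrho^2}$, which is dominated by $p=2$ and is indeed $\ll Sk^2\,3^{k^2\varrho^2}\ll Se^{\Sigma_\delta k^2}$; but that reasoning only covers the regime where the total sum is $O(1)$, and brings you back to the gap in the first point.
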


\begin{proof}
	By the chain rule we get that
	\begin{equation} \label{taylor}
		\begin{split}
			A_{k,\varrho}(\underline{s}) &= A_{k,\varrho}(0) + \int_0^1 \frac{d}{dx} A_{k,\varrho}(x \underline{s})\,dx \\
			&\qquad\qquad\qquad\qquad\qquad= a(k,\varrho) + \sum_{\ell=1}^{2k}\int_0^1  s_\ell \frac{\partial}{\partial s_\ell} A_{k,\varrho}(x \underline{s})\,dx,
		\end{split}
	\end{equation}
	so it remains to show that the partial derivatives satisfy
	\[\left|\frac{\partial}{\partial s_\ell} A_{k,\varrho}(\underline{s})\right| \leq a(k,\varrho) e^{\Sigma_\delta^\prime k^2}\]
	when $S\leq 1/\log{k}$. Note that the factor of $2k$ obtained when we take absolute values of the right hand side of \eqref{taylor} can be absorbed into the exponential. By symmetry, we consider only the case $\ell=1$. We first note that since $\mre(s_i+s_{j+k})\geq0$, we have that
	\[\frac{\partial}{\partial s_1} \log \prod_{i,j=1}^k \left(1-\frac{1}{p^{-1-s_i-s_{j+k}}}\right)^{\varrho^2} = \varrho^2\sum_{j=1}^k p^{-1-s_1-s_{j+k}} \log{p} + O\left(\frac{k\log{p}}{p^2}\right).\]	
	and that
	\begin{multline*}
		\frac{\partial}{\partial s_1} \log{\sum_{\substack{m_1+\cdots+ m_k=\\m_{k+1}+\cdots+ m_{2k}\\
			m_j\geq 0}} \frac{\varrho^{m_1+\cdots+m_{2k}}}{p^{m_1(\frac{1}{2}+s_1)+\cdots +m_{2k}(\frac{1}{2}+s_{2k})}}} \\ 
			= -\varrho^2 \sum_{j=1}^k p^{-1-s_1-s_{j+k}} \log{p} + O\left(e^{C_\delta k} \frac{\log{p}}{p^2}\right).
	\end{multline*}
	Here we used the same trick used on the small primes in Lemma~\ref{lem:norton} and that $\mre(s_i+s_{j+k})\geq0$ twice. Specifically, we estimated
	\[\sum_{\substack{m_1+\cdots+ m_k=\\m_{k+1}+\cdots+ m_{2k}\\
			m_j\geq M}} \frac{\varrho^{m_1+\cdots+m_{2k}}}{p^{m_1(\frac{1}{2}+s_1)+\cdots +m_{2k}(\frac{1}{2}+s_{2k})}}  \ll \left(\frac{k^2 \varrho^2}{p}\right)^M \left(1-\frac{\varrho}{\sqrt{2}}\right)^{-2k}\]
	for $M=2$ in the numerator and $M=1$ in the denominator.
	
	By logarithmic differentiation we therefore obtain
	\[\frac{\partial}{\partial s_1} A_{k,\varrho}(\underline{s}) \ll |A_{k,\varrho}(\underline{s})| \left(e^{C_\delta k}+k\right)\sum_p \frac{\log{p}}{p^2},\]
	and it is sufficient to show that $|A_{k,\varrho}(\underline{s})| \leq A_{k,\varrho}(\underline{0})e^{\Sigma_\delta k^2}$. We will split the ratio $A_{k,\varrho}(\underline{s})/A_{k,\varrho}(\underline{0})$ into four parts, which will be estimated separately. 
	
	\textbf{I.} For primes $p\leq  2k^2$ we use Taylor expansions to estimate
	\begin{multline*}
		\left(1-\frac{1}{p}\right)^{-k^2 \varrho^2} \prod_{i,j=1}^k \left(1-\frac{1}{p^{1+s_i+s_{j+k}}}\right)^{\varrho^2}
				\\=  \exp\left(\varrho^2 \sum_{i,j=1}^k \left(\frac{1}{p}-\frac{1}{p^{1+s_i+s_{j+k}}}\right) + O\left(\frac{k^2 \varrho^2}{p^2}\right)\right)
				\\\leq  \exp\left(\varrho^2 \sum_{i,j=1}^k \frac{C|S|\log{p}}{p} + O\left(\frac{k^2 \varrho^2}{p^2}\right) \right)
	\end{multline*}
	under the assumption that $S\log{p}$ is bounded. Summing over $p\leq 2k^2 $ and using the prime number theorem yields a total contribution
	\[\prod_{p \leq 2k^2} \left(1-\frac{1}{p}\right)^{-k^2 \varrho^2} \prod_{i,j=1}^k \left(1-\frac{1}{p^{1+s_i+s_{j+k}}}\right)^{\varrho^2} \leq \exp(O(k^2)),\]
	where we used that $S\leq 1/\log{k}$.
	
	\textbf{II.} Since $\mre(s_{i}+s_{j+k})\geq0$ we get that
	\begin{equation} \label{eq:bigsumest}
		\Bigg|\sum_{\substack{m_1+\cdots+ m_k=\\m_{k+1}+\cdots+ m_{2k}\\
					m_j\geq 0}} \frac{\varrho^{m_1+\cdots+m_{2k}}}{p^{m_1(\frac{1}{2}+s_1)+\cdots +m_{2k}(\frac{1}{2}+s_{2k})}}\Bigg| \leq \sum_{j=0}^\infty \frac{d_k^2(p^j) \varrho^{2j}}{p^j}
	\end{equation}
			so the ratio between these two are bounded by $1$. We apply this estimate only for $p\leq 2k^2$.
	
	\textbf{III.} For primes $p>2k^2$ we consider
	\begin{equation} \label{eq:largeprimes}
		\prod_{i,j=1}^k\bigg(1-\frac{1}{p^{1+s_i+s_{j+k}}}\bigg)^{\varrho^2} \sum_{\substack{m_1+\cdots+ m_k=\\m_{k+1}+\cdots+ m_{2k}\\
			m_j\geq 0}} \frac{\varrho^{m_1+\cdots+m_{2k}}}{p^{m_1(\frac{1}{2}+s_1)+\cdots +m_{2k}(\frac{1}{2}+s_{2k})}}.
	\end{equation}
	We will use \eqref{eq:bigsumest} combined with the estimate
	\[\Bigg|\prod_{i,j=1}^k\bigg(1-\frac{1}{p^{1+s_i+s_{j+k}}}\bigg)^{\varrho^2}\Bigg| \leq \left(1+\frac{1}{p}\right)^{k^2\varrho^2} \leq \sum_{j=0}^\infty \left|\binom{k^2 \varrho^2}{j}\right| \frac{1}{p^j} \]
	obtained by the fact that $\mre(s_{i}+s_{j+k})\geq0$. Before we apply these estimates, we observe that the first order terms in \eqref{eq:largeprimes} cancel in a similar way to what we found in the logarithmic differentiation above. After combining this observation with the two estimates, we find that the absolute value of \eqref{eq:largeprimes} is smaller than
	\[1 + \sum_{j=2}^\infty \frac{1}{p^j} \sum_{j_1+j_2 = j} \left|\binom{k^2 \varrho^2}{j_1}\right| \varrho^{2j_2} d_k^2(p^{j_2}) \leq 1 + \sum_{j=2}^\infty \frac{(k\varrho)^{2j}(j+1)}{p^j} \leq 1+ \frac{C_\delta k^4 \varrho^4 }{p^2},\]
	where we use that $d_k(p^j) \leq k^j$ and that if $\alpha\geq1$, then
	\[\left|\binom{\alpha}{j}\right| \leq \alpha^j.\]
	We then get a total contribution which is smaller than
	\[\prod_{p>2k^2}\left(1+\frac{C_\delta k^4 \varrho^4}{p^2}\right) \leq \exp\left(C_\delta' k^4 \varrho^4 \sum_{p>2k^2} \frac{1}{p^2}\right) = \exp\left(O_\delta\left(\frac{k^2}{\log{k}}\right)\right).\]
	\textbf{IV.} As the final part in the proof of Lemma~\ref{lem:mdsfactor}, we find that
	\[\prod_{p> 2k^2} \left(1-\frac{1}{p}\right)^{k^2\varrho^2} \sum_{j=0}^\infty \frac{d_k^2(p^j) \varrho^{2j}}{p^j} \geq \prod_{p>2k^2} \left(1-\frac{1}{p}\right)^{k^2 \varrho^2} \left(1+\frac{k^2 \varrho^2}{p}\right) \gg 1\]
	Combining estimates $\textbf{I--IV}$ completes the proof.
\end{proof}

\section{Proof of Theorem~\ref{thm:twistedmoment}} \label{sec:twisted}
Throughout this section, we let $\mathcal{L} = \log{N}$ to simplify various expressions and computations that will appear. We will also assume that $\delta>0$ is fixed and that $0 \leq \varrho \leq \sqrt{2-\delta}$.

To prove Theorem~\ref{thm:twistedmoment}, we first want to express the smoothed version of \eqref{eq:pseudosum} as a $2k$ fold contour integral of \eqref{eq:mds} by applying Perron's formula in each variable $n_\ell$. The smoothing factor yields additional convergence in the integrals that allows us to obtain uniform estimates.

To extract the leading term of this integral our plan is to apply, what is in essence, the saddle point method. This involves identifying the point where the main contribution of the integral arises from, then truncating the integrals at a low height around this point and expanding the integrand in terms of Taylor and Laurent series. After extracting the main term and the arithmetic factor $a(k,\varrho)$, we re-extend the integrals and apply Perron's formula again to compute the geometric factor $\gamma(k,\varrho)$.

To obtain a representation of $\mathcal{S}_{k,\varrho}(N)$ as a $2k$ fold integral, we want to use the following version of Perron's formula. For $c>0$, it holds that
\begin{equation}\label{eq:perrons} 
	\frac{1}{2\pi i} \int_{c-i\infty}^{c+i\infty} x^s \, \frac{ds}{s^2} = 
	\begin{cases}
		\log{x}, & 1\leq x<\infty, \\
		0, & 0<x<1. 
	\end{cases}
\end{equation}
Expanding the integral \eqref{eq:twmoment} as in \eqref{eq:pseudosum}, we find that 
\begin{multline*}
	\mathcal{S}_{k,\varrho}(N) = \sum_{\substack{n_1\cdots n_k=\\n_{k+1}\cdots n_{2k}\\n_j\leq N}}\frac{\varrho^{\Omega(n_1)}\cdots \varrho^{\Omega(n_{2k})}}{(n_1\cdots n_{2k})^{1/2}} \prod_{\ell=1}^{2k} \left(1-\frac{\log{n_\ell}}{\log{N}}\right)	
	\\	= \frac{1}{(\log{N})^{2k} (2\pi i )^{2k}} \int_{c-i\infty}^{c+i\infty} \cdots \int_{c-i\infty}^{c+i\infty} F_{k,\varrho}(\underline{s}) \prod_{\ell=1}^{2k} N^{s_\ell} \frac{ds_\ell}{s_\ell^2}, 
\end{multline*}
where we applied \eqref{eq:perrons} with $x= N/n_\ell$ for $1\leq \ell \leq 2k$. We now substitute $s_\ell \mapsto s_\ell/\mathcal{L}$ and find that
\begin{equation} \label{eq:Skrhoint}
	\mathcal{S}_{k,\varrho}(N) = \frac{1}{(2\pi i)^{2k}} \int_{\sigma-i\infty}^{\sigma+i\infty} \cdots \int_{\sigma-i\infty}^{\sigma+i\infty} F_{k,\varrho}(\underline{s}/\mathcal{L}) \prod_{\ell=1}^{2k} e^{s_\ell}\, \frac{ds_\ell}{s_\ell^2}
\end{equation}
where $\sigma = \mathcal{L} c$. Our goal is now to choose $\sigma>0$ and truncate the integrals at a suitable height $\mathcal{T}$. Specifically, we will obtain the following result.
\begin{lemma}\label{lem:truncit} 
	If $k = o(\mathcal{T})$, then
	\[\mathcal{S}_{k,\varrho}(N) = \frac{1}{(2\pi i)^{2k}} \int_{k-i\mathcal{T}}^{k+i\mathcal{T}} \cdots\int_{k-i\mathcal{T}}^{k+i\mathcal{T}} F_{k,\varrho}(\underline{s}/\mathcal{L}) \prod_{\ell=1}^{2k} e^{s_\ell}\, \frac{ds_\ell}{s_\ell^2} 
		+ O\left(\frac{\mathcal{C}_{k,\varrho}(N)}{k^{2k-1}\mathcal{T}}\right)\]
	where $\mathcal{C}_{k,\varrho}(N) = |e^{2k^2} F_{k,\varrho}\left(k/\mathcal{L},\ldots,k/\mathcal{L}\right)|$. If $k \leq C_1 \log{N}/\log\log{N}$, then 
	\begin{equation} \label{eq:Ckrho}
		\mathcal{C}_{k,\varrho}(N) \leq (\log{N})^{k^2\varrho^2}\exp\left(-k^2 \varrho^2\left(\log{k}+\log\log{k}+O_\delta(1)\right)\right).
	\end{equation}
\end{lemma}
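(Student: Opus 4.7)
The plan is to work from the contour representation (\ref{eq:Skrhoint}). Since Perron's formula (\ref{eq:perrons}) is valid for any $c>0$, I will simply choose $c=k/\mathcal{L}$ so that after the rescaling $s_\ell\mapsto s_\ell/\mathcal{L}$ the contour already sits on $\mre(s_\ell)=k$. Two tasks then remain: truncate each of the $2k$ vertical integrals to height $\mathcal{T}$, and apply Lemma~\ref{lem:norton} to control $\mathcal{C}_{k,\varrho}(N)$.

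For the truncation I decompose the difference between the full and truncated integrals by inclusion--exclusion across the $2k$ coordinates: it is a signed sum of $2k$ integrals in which at least one coordinate $s_{\ell_0}$ is restricted to the tail $\{|\mim s_{\ell_0}|>\mathcal{T}\}$ while the remaining $2k-1$ coordinates run over the full vertical lines $\mre s_\ell=k$, plus higher-order combinations in which two or more coordinates are in their tails (each extra tail brings an additional factor of $1/\mathcal{T}$, so these are negligible). Since $F_{k,\varrho}$ is a Dirichlet series with nonnegative coefficients, the pointwise bound $|F_{k,\varrho}(\underline{s}/\mathcal{L})|\leq F_{k,\varrho}(k/\mathcal{L},\dots,k/\mathcal{L})$ holds along the contour, and combined with $|\prod_\ell e^{s_\ell}|=e^{2k^2}$ it pulls $\mathcal{C}_{k,\varrho}(N)$ outside the integrals. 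The remaining $t$-integrals decouple, and I will use
\[\int_{-\infty}^\infty \frac{dt}{k^2+t^2}=\frac{\pi}{k},\qquad \int_{|t|>\mathcal{T}}\frac{dt}{k^2+t^2}\ll \frac{1}{\mathcal{T}}\]
(the latter requires $k=o(\mathcal{T})$) to bound each first-order tail term by $\mathcal{C}_{k,\varrho}(N)\cdot(\pi/k)^{2k-1}/\mathcal{T}$; the prefactor $(2\pi)^{2k}$ together with the combinatorial sum over $\ell_0$ leaves the claimed $O(\mathcal{C}_{k,\varrho}(N)/(k^{2k-1}\mathcal{T}))$, with the numerical factor $2^{2k}$ providing ample room to absorb the combinatorial $2k$.

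For the bound on $\mathcal{C}_{k,\varrho}(N)$, I invoke Lemma~\ref{lem:norton} at the real point $\sigma=k/\mathcal{L}$. The hypothesis $\sigma\leq 1/\log k$ becomes $k\log k\leq\mathcal{L}$, which is implied by $k\leq C_1\log N/\log\log N$ for $C_1$ small enough. Substituting and using $\log(2k/\mathcal{L})=\log(2k)-\log\mathcal{L}$ converts the conclusion of Lemma~\ref{lem:norton} into the factor $(\log N)^{k^2\varrho^2}$ times $\exp(-k^2\varrho^2\log k-k^2\varrho^2\log\log k+O_\delta(k^2))$, with the $e^{2k^2}$ from the definition of $\mathcal{C}_{k,\varrho}(N)$ absorbed into the $O_\delta(k^2)$ error. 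The main subtlety in the argument is the inclusion--exclusion bookkeeping; once the first-order tail terms are seen to dominate, the rest reduces to the positivity-of-coefficients bound and a direct invocation of Lemma~\ref{lem:norton}.
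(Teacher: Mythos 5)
Your proposal is correct and follows essentially the same route as the paper: rescale so the contour sits at $\mre s_\ell = k$, pull out $\mathcal{C}_{k,\varrho}(N)$ by positivity of the coefficients of $F_{k,\varrho}$, bound the remaining decoupled $t$-integrals, and invoke Lemma~\ref{lem:norton} at $\sigma=k/\mathcal{L}$ for the size of $\mathcal{C}_{k,\varrho}(N)$. The only cosmetic difference is the bookkeeping of the tail region: you use the union bound over the $2k$ events $\{|\mim s_{\ell}|>\mathcal{T}\}$ (with remaining coordinates over full lines), whereas the paper expands $\prod_\ell(\text{truncated}+\text{tail})$ into $2^{2k}-1$ combinations and bounds each by the worst case; both give the stated $O(\mathcal{C}_{k,\varrho}(N)/(k^{2k-1}\mathcal{T}))$, and yours even yields a slightly sharper constant, so the inclusion--exclusion language (and the remark about higher-order terms) is more than you need.
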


\begin{proof}
	Let us first explain the choice $\sigma=k$. From Lemma~\ref{lem:norton} we get that
	\[\left|F_{k,\varrho}(\underline{s}/\mathcal{L}) \prod_{\ell=1}^{2k} e^{s_\ell}\right| \leq \exp\left(2k \sigma -k^2\varrho^2 \left(\log(2\sigma/\mathcal{L})+\log\log{k^2\varrho^2} + O_\delta(1)\right)\right),\]
	provided $\sigma/\mathcal{L} \leq 1/\log{k}$. A calculus argument gives that the optimal value is $2\sigma = k \varrho^2$, but we will for notational simplicity use $\sigma=k$. The effect of this suboptimal choice is absorbed in the $O_\delta(1)$ term. If $k \leq C_1\log{N}/\log\log{N}$ then $\sigma/\mathcal{L}\leq 1/\log{k}$, and we obtain \eqref{eq:Ckrho}.
	
	We now consider the error when truncating \eqref{eq:Skrhoint} at height $t_\ell = \mathcal{T}$ for each integral. We take absolute values inside the integrals and extract $\mathcal{C}_{k,\varrho}(N)$. What remains are $2^{2k-1}$ combinations of integrals of the following types:
	\begin{align}
		\frac{1}{2\pi} \int_{k-i\mathcal{T}}^{k+i\mathcal{T}} \frac{ds}{|s|^2} &\leq \frac{1}{2\pi}\int_{k-i\infty}^{k+i\infty} \frac{ds}{|s|^2} = \frac{1}{2k} \label{eq:firstint} \\
		\frac{1}{2\pi} \left(\int_{k-i\infty}^{k-i\mathcal{T}} + \int_{k+ i\mathcal{T}}^{k+ i\infty}\right) \frac{ds}{|s|^2} &= \frac{1}{\pi k}\left(\frac{\pi}{2}-\arctan(\mathcal{T}/k)\right) \leq \frac{C}{\mathcal{T}} \label{eq:secondint}
	\end{align}
	since $k = o(\mathcal{T})$ and
	\[\arctan{x} = \frac{\pi}{2}-\frac{1}{x}+O(x^{-3})\]
	as $x\to\infty$. Since $k = o(\mathcal{T})$, the integrals \eqref{eq:secondint} are smaller than the integrals \eqref{eq:firstint}. Hence the largest contribution from the error is obtained by choosing the maximal number of integrals like \eqref{eq:firstint}. However, there is always at least one integral like \eqref{eq:secondint}, so we conclude that the total error is at most
	\[\mathcal{C}_{k,\varrho}(N) \times 2^{2k-1} \times \frac{1}{(2k)^{2k-1}} \frac{C}{\mathcal{T}} \ll \frac{\mathcal{C}_{k,\varrho}(N)}{k^{2k-1} \mathcal{T}}\]
	as desired.	
\end{proof}

We will now investigate the integral
\begin{equation} \label{eq:IkrhoNT}
	\mathcal{I}_{k,\varrho}(N,\mathcal{T}) = \frac{1}{(2\pi i)^{2k}} \int_{k-i\mathcal{T}}^{k+i\mathcal{T}} \cdots\int_{k-i\mathcal{T}}^{k+i\mathcal{T}} F_{k,\varrho}(\underline{s}/\mathcal{L}) \prod_{\ell=1}^{2k} e^{s_\ell}\, \frac{ds_\ell}{s_\ell^2},
\end{equation}
appearing in Lemma~\ref{lem:truncit}, where the parameter $\mathcal{T}$ will be chosen later. To extract the main term from this integral, we will apply Lemma~\ref{lem:mdsfactor}, Lemma~\ref{lem:zetaprod} and Lemma~\ref{lem:Akrhoest}.

\begin{lemma} \label{lem:JkrhoT}
	Let $\mathcal{I}_{k,\varrho}(N,\mathcal{T})$ be as in \eqref{eq:IkrhoNT} and set
	\begin{equation} \label{eq:JkrhoT}
		\mathcal{J}_{k,\varrho}(\mathcal{T}) = \frac{1}{(2\pi i)^{2k}}\int_{k-i\mathcal{T}}^{k+i\mathcal{T}}\cdots \int_{k-i\mathcal{T}}^{k+i\mathcal{T}} \prod_{i,j=1}^k\frac{1}{(s_i+s_{j+k})^{\varrho^2}} \prod_{\ell=1}^{2k} e^{s_\ell}\frac{ds_\ell}{s_\ell^2}.
	\end{equation}
	Suppose that $k = o(\mathcal{T})$, and that $k^2\mathcal{T}/\mathcal{L} = o(1)$. Then
	\[\mathcal{I}_{k,\varrho}(N,\mathcal{T}) = \mathcal{L}^{k^2\varrho^2} a(k,\varrho) \left(\mathcal{J}_{k,\varrho}(\mathcal{T})+ O\left(\frac{\mathcal{T}}{\mathcal{L}} \frac{e^{ k^2(\Sigma_\delta +2 + o(1))}}{(2k)^{k^2\varrho^2}} \right)\right).\]
\end{lemma}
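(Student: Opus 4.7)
The plan is to apply the three preliminary lemmas of Section~\ref{sec:est} in succession to reduce $F_{k,\varrho}(\underline{s}/\mathcal{L})$ on the contour to its principal part $a(k,\varrho)\mathcal{L}^{k^2\varrho^2}\prod_{i,j=1}^k (s_i+s_{j+k})^{-\varrho^2}$, and then to estimate the resulting error integral by crude pointwise bounds on the contour. Concretely, Lemma~\ref{lem:mdsfactor} factors $F_{k,\varrho}(\underline{s}/\mathcal{L})$ as $A_{k,\varrho}(\underline{s}/\mathcal{L})\prod_{i,j=1}^{k}\zeta(1+(s_i+s_{j+k})/\mathcal{L})^{\varrho^2}$, which is valid since $\mre((s_i+s_{j+k})/\mathcal{L})=2k/\mathcal{L}>0$ on the contour. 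Next, Lemma~\ref{lem:zetaprod} replaces the zeta product by $\mathcal{L}^{k^2\varrho^2}\prod_{i,j}(s_i+s_{j+k})^{-\varrho^2}$ with multiplicative error $1+O(k^2\mathcal{T}/\mathcal{L})$, and Lemma~\ref{lem:Akrhoest} replaces $A_{k,\varrho}(\underline{s}/\mathcal{L})$ by $a(k,\varrho)$ with multiplicative error $1+O((\mathcal{T}/\mathcal{L})e^{\Sigma_\delta k^2})$. On the contour $|s_\ell/\mathcal{L}|\leq (k+\mathcal{T})/\mathcal{L}\ll \mathcal{T}/\mathcal{L}$ (using $k=o(\mathcal{T})$), so the hypotheses $Sk^2=o(1)$ and $S\leq 1/\log k$ both follow from the standing assumption $k^2\mathcal{T}/\mathcal{L}=o(1)$, and the two error factors merge into a single $1+O((\mathcal{T}/\mathcal{L})e^{\Sigma_\delta k^2})$.

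Substituting the resulting approximation into \eqref{eq:IkrhoNT} and pulling out $\mathcal{L}^{k^2\varrho^2}a(k,\varrho)$ isolates $\mathcal{J}_{k,\varrho}(\mathcal{T})$ as the main term; the remainder equals $O((\mathcal{T}/\mathcal{L})e^{\Sigma_\delta k^2})$ times
\[\frac{1}{(2\pi)^{2k}}\int_{k-i\mathcal{T}}^{k+i\mathcal{T}}\cdots\int_{k-i\mathcal{T}}^{k+i\mathcal{T}}\left|\prod_{i,j=1}^{k}\frac{1}{(s_i+s_{j+k})^{\varrho^2}}\right|\prod_{\ell=1}^{2k}\frac{|e^{s_\ell}|}{|s_\ell|^2}\,|ds_\ell|.\]
On the contour one has $|s_i+s_{j+k}|\geq 2\mre s_i=2k$ and $|e^{s_\ell}|=e^k$, while the tail estimate \eqref{eq:firstint} gives $\int|ds_\ell|/|s_\ell|^2\leq 1/k$. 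Multiplying these pointwise bounds produces $(2k)^{-k^2\varrho^2}e^{2k^2}(2\pi k)^{-2k}$, and since $(2\pi k)^{-2k}=e^{-2k\log(2\pi k)}=e^{o(k^2)}$ as $k\to\infty$, this contributes $e^{k^2(2+o(1))}(2k)^{-k^2\varrho^2}$, yielding exactly the claimed error bound.

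The main technical point is the bookkeeping that merges the three multiplicative approximations into a single $e^{\Sigma_\delta k^2}$ factor; in particular, the $k^2$ error coming from the zeta factorisation is absorbed by $e^{\Sigma_\delta k^2}$. A secondary observation is that $k\log k=o(k^2)$ is what permits $(2\pi k)^{-2k}$ to disappear into $e^{o(k^2)}$, exactly matching the prefactor $e^{2k^2}$ appearing in $\mathcal{C}_{k,\varrho}(N)$ from Lemma~\ref{lem:truncit}; this coincidence is essential for the stated form of the error bound.
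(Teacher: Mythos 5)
Your argument is essentially the paper's: apply Lemmas~\ref{lem:mdsfactor}, \ref{lem:zetaprod} and \ref{lem:Akrhoest} to replace $F_{k,\varrho}(\underline{s}/\mathcal{L})$ by $\mathcal{L}^{k^2\varrho^2}a(k,\varrho)\prod_{i,j}(s_i+s_{j+k})^{-\varrho^2}$ with multiplicative error $1+O((\mathcal{T}/\mathcal{L})e^{k^2(\Sigma_\delta+o(1))})$, then bound the resulting error integral by taking absolute values inside and using $|s_i+s_{j+k}|\geq 2k$, $|e^{s_\ell}|=e^k$, and \eqref{eq:firstint}, exactly as in the paper. One tiny arithmetic slip: \eqref{eq:firstint} gives $\int|ds_\ell|/|s_\ell|^2\leq \pi/k$ rather than $1/k$, so the tail factor is $(2k)^{-2k}$ (as in the paper) rather than $(2\pi k)^{-2k}$, but since both are $e^{o(k^2)}$ this does not affect the stated error bound.
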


\begin{proof}
	With the assumptions on $k$ and $\mathcal{T}$ we get from Lemma~\ref{lem:mdsfactor}, Lemma~\ref{lem:zetaprod} and Lemma~\ref{lem:Akrhoest} that in the domain of integration it holds that
	\[F_{k,\varrho}(\underline{s}/\mathcal{L}) = \mathcal{L}^{k^2 \varrho^2} a(k,\varrho) \prod_{i,j=1}^k\frac{1}{(s_i+s_{j+k})^{\varrho^2}} \left(1 + O\left(\frac{\mathcal{T}}{\mathcal{L}} e^{k^2(\Sigma_\delta+o(1))}\right)\right).\]
	We complete the proof by noting that
	\[\left|\frac{1}{(2\pi i)^{2k}}\int_{k-i\mathcal{T}}^{k+i\mathcal{T}}\cdots \int_{k-i\mathcal{T}}^{k+i\mathcal{T}}\prod_{i,j=1}^k\frac{1}{(s_i+s_{j+k})^{\varrho^2}} \prod_{\ell=1}^{2k} e^{s_\ell}\frac{ds_\ell}{s_\ell^2}\right| \leq \frac{e^{2k^2}}{(2k)^{k^2\varrho^2}} \frac{1}{(2k)^{2k}},\]
	where we move the absolute values inside and use \eqref{eq:firstint}.
\end{proof}

We now use Perron's formula in reverse to extract the geometric factor from the integral appearing in Lemma~\ref{lem:JkrhoT}.

\begin{lemma} \label{lem:geo}
	If $k = o(\mathcal{T})$ and $\mathcal{J}_{k,\varrho}(\mathcal{T})$ is as in \eqref{eq:JkrhoT}, then
	\[\mathcal{J}_{k,\varrho}(\mathcal{T}) = \gamma(k,\varrho)+O\left(\frac{1}{\mathcal{T}}\frac{e^{k^2(2+o(1))}}{(2k)^{k^2\varrho^2}}\right)\]
	where $\gamma(k,\varrho)$ is the geometric factor in Theorem~\ref{thm:twistedmoment}.
\end{lemma}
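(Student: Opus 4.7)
The plan is to evaluate the untruncated integral $\mathcal{J}_{k,\varrho}(\infty)$ in closed form, identify it with $\gamma(k,\varrho)$, and bound the truncation error $\mathcal{J}_{k,\varrho}(\mathcal{T})-\mathcal{J}_{k,\varrho}(\infty)$ separately by crude absolute-value estimates on the contour.

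For the evaluation, I would use the Gamma integral representation
\[\frac{1}{s^{\varrho^2}}=\frac{1}{\Gamma(\varrho^2)}\int_0^\infty e^{-us}u^{\varrho^2-1}\,du,\qquad \mre s>0,\]
applied to each of the $k^2$ factors $(s_i+s_{j+k})^{-\varrho^2}$ in $\mathcal{J}_{k,\varrho}(\infty)$, introducing auxiliary variables $u_{ij}\geq 0$. Since the contour satisfies $\mre s_\ell=k$, the absolute value of the combined integrand is integrable over $\mathbb{R}^{2k}\times[0,\infty)^{k^2}$, so Fubini applies. After the interchange, each inner $s_\ell$-integral takes the form
\[\frac{1}{2\pi i}\int_{k-i\infty}^{k+i\infty}\frac{e^{s_\ell(1-U_\ell)}}{s_\ell^2}\,ds_\ell,\]
where $U_\ell=\sum_{j=1}^k u_{\ell j}$ for $\ell\leq k$ and $U_\ell=\sum_{i=1}^k u_{i,\ell-k}$ for $\ell>k$. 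Perron's formula \eqref{eq:perrons} evaluates this to $\max(1-U_\ell,0)$, confining the $u$-integration to the $2k$ constraints $U_\ell\leq 1$. The change of variables $u_{ij}=x_{ij}^{1/\varrho^2}$ gives $u_{ij}^{\varrho^2-1}\,du_{ij}=\varrho^{-2}\,dx_{ij}$, so the prefactor becomes $\Gamma(\varrho^2)^{-k^2}\varrho^{-2k^2}=\Gamma(1+\varrho^2)^{-k^2}$ via $\Gamma(1+\varrho^2)=\varrho^2\Gamma(\varrho^2)$, while the constraints $U_\ell\leq 1$ transform into $\sum_j x_{\ell j}^{1/\varrho^2}\leq 1$ and $\sum_i x_{ij}^{1/\varrho^2}\leq 1$, cutting out $\mathcal{P}_{k,\varrho}$. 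Matching factors yields exactly $\gamma(k,\varrho)$.

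For the truncation error, I take absolute values directly inside the contour integral for $\mathcal{J}_{k,\varrho}(\mathcal{T})-\mathcal{J}_{k,\varrho}(\infty)$. On the contour $\mre s_\ell=k$ we have $|e^{s_\ell}|=e^k$, $|s_\ell|^2=k^2+t_\ell^2$, and $|s_i+s_{j+k}|\geq 2k$, so
\[\bigl|\text{integrand}\bigr|\leq \frac{e^{2k^2}}{(2k)^{k^2\varrho^2}}\prod_{\ell=1}^{2k}\frac{1}{k^2+t_\ell^2},\]
which factors over the $t_\ell$. Using $\frac{1}{2\pi}\int_{\mathbb{R}}\frac{dt}{k^2+t^2}=\frac{1}{2k}$ and $\frac{1}{2\pi}\int_{-\mathcal{T}}^{\mathcal{T}}\frac{dt}{k^2+t^2}=\frac{1}{2k}-O(1/\mathcal{T})$ (valid since $k=o(\mathcal{T})$), a one-step telescoping $a^{2k}-b^{2k}\leq 2k(a-b)\,a^{2k-1}$ gives
\[|\mathcal{J}_{k,\varrho}(\mathcal{T})-\mathcal{J}_{k,\varrho}(\infty)|\ll \frac{e^{2k^2}}{(2k)^{k^2\varrho^2}}\cdot\frac{2k}{\mathcal{T}(2k)^{2k-1}}=\frac{1}{\mathcal{T}}\cdot\frac{e^{2k^2-(2k-2)\log(2k)}}{(2k)^{k^2\varrho^2}},\]
and $(2k-2)\log(2k)=o(k^2)$ is absorbed into $e^{o(k^2)}$ to produce the claimed bound.

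The only step requiring verification is the application of Fubini in the first part and the identification of the correct change of variables to match the stated form of $\gamma(k,\varrho)$; I do not foresee a genuine obstacle, as both steps follow directly from the Gamma and Perron identities.
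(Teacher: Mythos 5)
Your proof is correct and follows essentially the same route as the paper: evaluate the full-contour integral via the Gamma-function integral representation, exchange orders of integration, apply Perron's formula, and change variables to arrive at the twisted polytope; then bound the truncation error by absolute-value estimates on the line $\mre s_\ell=k$. The only cosmetic difference is your treatment of the truncation error, where you use the telescoping inequality $a^{2k}-b^{2k}\le 2k(a-b)a^{2k-1}$ rather than the paper's explicit count of the $2^{2k-1}$ ways of distributing bounded and tail one-variable integrals; both give the same bound.
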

\begin{proof}
	We first re-extend the integrals and estimate as in Lemma~\ref{lem:truncit}. Since $\mre(s_\ell)=k$ for $1\leq \ell \leq 2k$, we have that 
	\[\left|\prod_{i,j=1}^k\frac{1}{(s_i+s_{j+k})^{\varrho^2}} \prod_{\ell=1}^{2k} e^{s_\ell}\right|\leq \frac{e^{2k^2}}{(2k)^{k^2\varrho^2}}.\]
	We then follow the second part of the proof of Lemma~\ref{lem:truncit} line for line to obtain the stated error term. What remains is to show that $\gamma(k,\varrho) = \mathcal{J}_{k,\varrho}$ where
	\begin{equation} \label{eq:toshow}
		\mathcal{J}_{k,\varrho} = \frac{1}{(2\pi i)^{2k}}\int_{k-i\infty}^{k+i\infty}\cdots \int_{k-i\infty}^{k+i\infty} \prod_{i,j=1}^k\frac{1}{(s_i+s_{j+k})^{\varrho^2}} \prod_{\ell=1}^{2k} e^{s_\ell}\frac{ds_\ell}{s_\ell^2}.
	\end{equation}
	We begin with the integral representation of the gamma function,
	\[\Gamma(\varrho^2) = \int_0^\infty e^{-x} x^{\varrho^2}\,\frac{dx}{x}\]
	and substitute $x \mapsto (s_i+s_{j+k}) x_{ij}$ for $\mre(s_i+s_{j+k})>0$ which gives that
	\[\frac{1}{(s_i+s_{j+k})^{\varrho^2}} = \frac{1}{\Gamma(\varrho^2)} \int_0^\infty e^{-(s_i+s_{j+k})x_{ij}}x_{ij}^{\varrho^2}\,\frac{dx_{ij}}{x_{ij}}.\]
	For each term of the $k^2$ factors in the product over $i,j$ in \eqref{eq:toshow}, we apply this identity to the effect that 
	\begin{multline*}
		\mathcal{J}_{k,\varrho}=\frac{1}{\Gamma(\varrho^2)^{k^2}}\int_0^\infty \cdots \int_0^\infty \frac{1}{(2\pi i)^{2k}}\int_{k-i\infty}^{k+i\infty}\cdots \int_{k-i\infty}^{k+i\infty} \\\times\left[\prod_{i=1}^k e^{s_i\left(1-\sum_{j=1}^k x_{ij}\right)} \prod_{j=1}^k e^{s_{j+k}\left(1-\sum_{i=1}^k x_{ij}\right)}\right]\, \prod_{\ell=1}^{2k} \frac{ds_\ell}{s_\ell^2}\,\prod_{i,j=1}^k x_{ij}^{\varrho^2} \frac{dx_{ij}}{x_{ij}}, 
	\end{multline*}
	where the interchange in order of integration is valid by absolute convergence. The $s_\ell$-integrals are now separable and so here we may apply \eqref{eq:perrons} in the form
	\[\frac{1}{2\pi i }\int_{c-i\infty}^{c+i\infty}e^{s(1-X)}\frac{ds}{s^2} =
	\begin{cases}
		1-X, &\text{if } X\leq 1,\\
		0, &\text{if } X>1,
	\end{cases}
	\]
	with $c=k>0$ in each variable to find that 
	\[\mathcal{J}_{k,\varrho}=\frac{1}{\Gamma(\varrho^2)^{k^2}} \int_{\mathcal{P}_k} \prod_{i=1}^k \left(1-\sum_{j=1}^k x_{ij}\right) \prod_{j=1}^k \left(1-{\sum_{i=1}^k} x_{ij}\right)\,\prod_{i,j=1}^k x_{ij}^{\varrho^2} \frac{dx_{ij}}{x_{ij}}\]
	where $\mathcal{P}_k$ is the polytope \eqref{eq:Pk}. We then apply the substitution $x_{ij}^{\varrho^2} \mapsto x_{ij}$ in each variable to find that
	\[\mathcal{J}_{k,\varrho}=\frac{1}{\Gamma(1+\varrho^2)^{k^2}}\int_{\mathcal{P}_{k,\varrho}} \prod_{i=1}^k \left(1-\sum_{j=1}^k x_{ij}^{1/\varrho^2}\right) \prod_{j=1}^k \left(1-{\sum_{i=1}^k} x_{ij}^{1/\varrho^2}\right) d\underline{x}\]
	where $\mathcal{P}_{k,\varrho}$ is the twisted polytope \eqref{eq:twistedpoly} and so $\mathcal{J}_{k,\varrho}=\gamma(k,\varrho)$ as desired.
\end{proof}

By combining all the results of this section, we finally obtain a proof of Theorem~\ref{thm:twistedmoment}.

\begin{proof}[Final part in the proof of Theorem~\ref{thm:twistedmoment}]
	By Lemma~\ref{lem:truncit}, Lemma~\ref{lem:JkrhoT} and Lemma~\ref{lem:geo}, we get the desired main term with an error term that satisfies
	\[\mathcal{E}_{k,\varrho}(N) \ll \frac{\mathcal{C}_{k,\varrho}(N)}{k^{2k-1}\mathcal{T}} + \mathcal{L}^{k^2\varrho^2} a(k,\varrho)\left(\frac{\mathcal{T}}{\mathcal{L}} \frac{e^{ k^2(\Sigma_\delta +2 + o(1))}}{(2k)^{k^2\varrho^2}} +\frac{1}{\mathcal{T}}\frac{e^{k^2(2+o(1))}}{(2k)^{k^2\varrho^2}}\right),\]
	so we choose $\mathcal{T} = \sqrt{\mathcal{L}}$, recall \eqref{eq:Ckrho} and Lemma~\ref{lem:mdsfactor} to obtain
	\[\mathcal{E}_{k,\varrho}(N) \ll \mathcal{L}^{k^2\varrho^2-1/2}\exp\left(-k^2\varrho^2(\log{k}+\log\log{k}+O_\delta(1))\right)\]
	provided $k^2 = o(\sqrt{\mathcal{L}})$. To ensure that this is smaller than the main term, we require that $\mathcal{L}^{-1/2} e^{O_\delta(k^2)} \to 0$, which means that $k = o(\sqrt{\log\log{N}})$. 
\end{proof}

\section{Proof of Theorem~\ref{thm:asymptotics}} \label{sec:asymptotic} 
Let $0<q<\infty$ and define 
\begin{equation}\label{eq:Lq} 
	\|f\|_q = \left(\lim_{T\to\infty} \frac{1}{T}\int_0^T |f(it)|^q \,dt \right)^\frac{1}{q} 
\end{equation}
for $f(s) = \sum_{n=1}^N a_n n^{-s}$. The limit exists for any Dirichlet polynomial $f$ and every $0<q<\infty$ (see \cite{Bayart02}). We also set 
\begin{equation}\label{eq:Linf} 
	\|f\|_\infty = \sup_{t\in\mathbb{R}} |f(it)|, 
\end{equation}
and recall that $\|f\|_q \to \|f\|_\infty$ as $q\to\infty$. Note that the pseudomoments we are interested in \eqref{eq:pseudomoments} can alternatively be expressed as 
\begin{equation}\label{eq:pseudonorm} 
	\mathcal{M}_k(N) = \|f_N\|_{2k}^{2k} 
\end{equation}
for $f_N(s) = S_N \zeta(1/2+s)$ and $\mathcal{M}(N) = \|f_N\|_\infty$ in light of \eqref{eq:truelimit}.

Let $\varrho$ be any non-negative real number, and set
\[\mathcal{W}_\varrho f(s) = \sum_{n=1}^N \varrho^{\Omega(n)} a_n n^{-s}.\]
The following version of Weissler's inequality \cite{Weissler80} for Dirichlet polynomials can be extracted from \cite[Sec.~3]{Bayart02}. 
\begin{lemma}[Weissler's inequality] \label{lem:weissler} 
	Suppose that $0<q_1\leq q_2 < \infty$ and let $0<\varrho \leq \sqrt{q_1/q_2}$. Then
	\[\|\mathcal{W}_\varrho f\|_{q_2} \leq \|f\|_{q_1}\]
	for every Dirichlet polynomial $f(s) = \sum_{n=1}^N a_n n^{-s}$. 
\end{lemma}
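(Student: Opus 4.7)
The plan is to lift the inequality to a finite-dimensional polytorus via the Bohr correspondence and then reduce to the classical one-variable Weissler dilation inequality on the disc, which tensorises coordinate by coordinate. To a Dirichlet polynomial $f(s) = \sum_{n=1}^N a_n n^{-s}$ assign the analytic polynomial $P_f(z) = \sum_{n=1}^N a_n z^{\alpha(n)}$ on $\mathbb{T}^d$, where $d = \pi(N)$ and $\alpha(n) = (\alpha_1, \ldots, \alpha_d)$ is the exponent vector in the prime factorisation $n = p_1^{\alpha_1} \cdots p_d^{\alpha_d}$. By Kronecker's theorem the orbit $t \mapsto (p_1^{-it}, \ldots, p_d^{-it})$ is equidistributed on $\mathbb{T}^d$, so
\[\|f\|_q = \|P_f\|_{L^q(\mathbb{T}^d)}\]
for every $0 < q < \infty$. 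Under this identification, $\mathcal{W}_\varrho$ corresponds to the homogeneous dilation $P \mapsto P(\varrho\,\cdot\,)$, since each monomial $z^{\alpha(n)}$ receives the weight $\varrho^{|\alpha(n)|} = \varrho^{\Omega(n)}$.

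Thus the lemma reduces to showing the multivariable dilation inequality
\[\bigl\|P(\varrho \,\cdot\,)\bigr\|_{L^{q_2}(\mathbb{T}^d)} \leq \|P\|_{L^{q_1}(\mathbb{T}^d)}\]
for analytic polynomials $P$ on $\mathbb{T}^d$ whenever $\varrho \leq \sqrt{q_1/q_2}$. The one-dimensional case ($d=1$) is Weissler's original theorem \cite{Weissler80}. Its standard proof writes $P(rz)$ as the Poisson-type convolution of $P$ with the analytic kernel $(1 - r\overline{w})^{-1}$ and extracts the sharp bound either from the hypercontractive estimate for the Mehler (Ornstein--Uhlenbeck) semigroup, or equivalently from a two-point inequality lifted to the circle via a central-limit argument in the spirit of Bonami.

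To tensorise from $d=1$ to general $d$, we argue by induction. Freezing $z_2, \ldots, z_d$ and applying the one-dimensional inequality in $z_1$ to the analytic polynomial $z_1 \mapsto P(z_1, \varrho z_2, \ldots, \varrho z_d)$ yields
\[\bigl\|P(\varrho z_1, \varrho z_2, \ldots, \varrho z_d)\bigr\|_{L^{q_2}_{z_1}} \leq \bigl\|P(z_1, \varrho z_2, \ldots, \varrho z_d)\bigr\|_{L^{q_1}_{z_1}}.\]
Taking the $L^{q_2}$ norm in $(z_2, \ldots, z_d)$, exchanging the outer $L^{q_2}_{z_2,\ldots,z_d}$ with the inner $L^{q_1}_{z_1}$ by Minkowski's integral inequality (valid since $q_2/q_1 \geq 1$), and then applying the induction hypothesis in dimension $d-1$ for almost every fixed $z_1$ closes the induction.

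The main obstacle is the one-variable Weissler inequality with the sharp constant $\sqrt{q_1/q_2}$; once that is granted, the Bohr lift and the inductive tensorisation are routine. A minor technical point is that Minkowski's integral inequality requires $q_1 \geq 1$; the range $0 < q_1 < 1$ can be handled by exploiting the analytic structure of $P$ in place of Minkowski, as carried out in \cite[Sec.~3]{Bayart02}.
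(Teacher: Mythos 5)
The paper does not prove this lemma; it simply cites it as a consequence of \cite[Sec.~3]{Bayart02}. Your proposal reconstructs the argument behind that citation, and the structure is correct: lift to $\mathbb{T}^{\pi(N)}$ via the Bohr correspondence, observe that $\mathcal{W}_\varrho$ becomes the total-degree dilation $P\mapsto P(\varrho\,\cdot\,)$ because $\Omega(n)=\alpha_1+\cdots+\alpha_d$ is exactly the monomial degree, and tensorise the one-dimensional Weissler inequality by induction on the number of variables. This is indeed the route Bayart takes, so your proof is both correct and essentially the same as the one being referenced.

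One small correction: the hedge about Minkowski requiring $q_1\geq1$ is unnecessary, and it slightly obscures a clean argument. The step you need is
\[\left\| \bigl\| f \bigr\|_{L^{q_1}_{z_1}} \right\|_{L^{q_2}_{z_2,\ldots,z_d}} \leq \left\| \bigl\| f \bigr\|_{L^{q_2}_{z_2,\ldots,z_d}} \right\|_{L^{q_1}_{z_1}},\]
and this follows by applying Minkowski's integral inequality to the nonnegative function $|f|^{q_1}$ with exponent $p=q_2/q_1$: one only needs $q_2/q_1\geq1$, which is precisely the hypothesis $q_1\leq q_2$, and there is no separate requirement that $q_1\geq1$. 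Thus the tensorisation goes through uniformly for all $0<q_1\leq q_2<\infty$, and the only input that genuinely needs to be imported from the literature is the sharp one-variable Weissler inequality on the circle with constant $\sqrt{q_1/q_2}$.
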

Our plan is to use Lemma~\ref{lem:weissler} to relate $\mathcal{M}_k(N)$ for non-integers $k\geq1$ to the twisted moments $\mathcal{M}_{\lfloor k \rfloor,\varrho_1}(N)$ and $\mathcal{M}_{\lceil k \rceil, \varrho_2}(N)$. 

\subsection{Proof of the upper bound in Theorem~\ref{thm:asymptotics}} We begin with the proof of the upper bound in Theorem~\ref{thm:asymptotics}, which will be deduced from Lemma~\ref{lem:norton}, Lemma~\ref{lem:weissler} and Rankin's trick.
\begin{lemma}\label{lem:Mkrhoupper} 
	Fix $\delta>0$ and suppose that $0 < \varrho \leq \sqrt{2-\delta}$. Uniformly for every integer $1 \leq k \leq C_1\log{N}/\log\log{N}$ we have that
	\[\frac{\mathcal{M}_{k,\varrho}(N)}{(\log{N})^{k^2 \varrho^2}} \leq \exp\left(-k^2 \varrho^2 \left(\log(k^2 \varrho) + \log\log(k \varrho) + O_\delta(1)\right)\right).\]
\end{lemma}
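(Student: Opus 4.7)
The plan is to combine Rankin's trick with the diagonal estimate from Lemma~\ref{lem:norton}. Starting from the expansion \eqref{eq:pseudosum}, for any $\sigma > 0$ the elementary bound $\mathbf{1}_{n \leq N} \leq (N/n)^{\sigma}$ can be applied to each of the $2k$ summation variables. Since the summand is nonnegative, the constraint $n_j \leq N$ may then be dropped, giving
\[\mathcal{M}_{k,\varrho}(N) \leq N^{2k\sigma}\, F_{k,\varrho}(\sigma, \sigma, \ldots, \sigma).\]

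Next I would apply Lemma~\ref{lem:norton} with $s_\ell = \sigma$ (which is valid whenever $0 < \sigma \leq 1/\log k$) to obtain
\[\log \mathcal{M}_{k,\varrho}(N) \leq 2k\sigma \log N - k^2\varrho^2\bigl(\log(2\sigma) + \log\log(k^2\varrho^2)\bigr) + O_\delta(k^2\varrho^2).\]
The final step is to minimize the right-hand side in $\sigma$. Elementary calculus shows the optimum is attained at $\sigma^{\ast} = k\varrho^2/(2\log N)$, for which $2k\sigma^{\ast}\log N = k^2\varrho^2$ and $\log(2\sigma^{\ast}) = \log(k\varrho^2) - \log\log N$. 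Substituting this value, dividing through by $(\log N)^{k^2\varrho^2}$, and absorbing $\log\log(k^2\varrho^2) = \log\log(k\varrho) + O(1)$ into the error produces the claimed upper bound.

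The main (and essentially only) obstacle is to ensure that the saddle-point value $\sigma^{\ast}$ satisfies the hypothesis $\sigma^{\ast} \leq 1/\log k$ required for Lemma~\ref{lem:norton}. Since $\varrho \leq \sqrt{2-\delta}$, this reduces to $k\log k \ll_\delta \log N$, which translates to precisely the stated uniformity range $k \leq C_1 \log N/\log\log N$, provided $C_1$ is taken sufficiently small depending on $\delta$.
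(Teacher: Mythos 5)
Your proof is correct and follows essentially the same approach as the paper: Rankin's trick applied to each of the $2k$ summation variables gives $\mathcal{M}_{k,\varrho}(N) \le N^{2k\sigma}F_{k,\varrho}(\sigma,\ldots,\sigma)$ (the paper first rewrites the multi-sum via a restricted divisor function $d_{k,N}(n)$, but this is cosmetic), and then Lemma~\ref{lem:norton} is applied. The paper simply selects $\sigma = k\varrho^2/\log N$ rather than the exact minimizer $\sigma^\ast = k\varrho^2/(2\log N)$; the factor of two only shifts the $O_\delta(1)$ term, and your verification that $\sigma^\ast \le 1/\log k$ forces $k \ll_\delta \log N/\log\log N$ matches the paper.

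One thing to flag: carrying out your substitution explicitly yields $\log(k\varrho^2)$ in the first logarithm, not the $\log(k^2\varrho)$ that appears in the lemma as printed. For bounded $\varrho$ these differ by $\log(k/\varrho)$, which is of the same order as the leading term $\log k$; the printed form $\log(k^2\varrho)$ would be a strictly stronger upper bound, which Rankin's trick does not deliver and which would in fact contradict the lower bound arising from Theorem~\ref{thm:twistedmoment}. The downstream use in the proof of Theorem~\ref{thm:asymptotics} also requires $\log(k\varrho^2)$: there $\varrho = \sqrt{k/\lfloor k\rfloor}$, so $\lfloor k\rfloor\varrho^2 = k$ and raising to the power $\varrho^2$ produces $-k^2\log k$ precisely because the argument of the logarithm is $\lfloor k\rfloor\varrho^2$. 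So $\log(k^2\varrho)$ appears to be a typo, and the bound you actually derive is the intended one; your closing remark that the substitution ``produces the claimed upper bound'' should be read with this correction.
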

\begin{proof}
	Let
	\[d_{k,N}(n)=\sum_{\substack{n_1\cdots n_k=n \\
	n_j\leq N}} 1.\]
	We rewrite \eqref{eq:pseudosum} using $d_{k,N}$ and apply Rankin's trick to the effect that 
	\[\mathcal{M}_{k,\varrho}(N) = \sum_{n=1}^{N^k} \frac{d_{k,N}^2(n) \varrho^{2\Omega(n)}}{n} \leq N^{2k\sigma } \sum_{n=1}^{\infty} \frac{d_k^2(n)\varrho^{2\Omega(n)}}{n^{1+2\sigma}} = N^{2k\sigma } F(\sigma,\ldots, \sigma). \]
	We then apply Lemma~\ref{lem:norton} with $\sigma=k\varrho^2/\log N$. The requirement $\sigma \leq 1/\log{k}$ from Lemma~\ref{lem:norton} is satisfied for $k \leq C_1 \log{N}/\log\log{N}$.
\end{proof}
\begin{proof}
	[Proof of the upper bound in Theorem~\ref{thm:asymptotics}] If $k\geq2$ is an integer, we directly use Lemma~\ref{lem:Mkrhoupper} with $\varrho=1$. If $k\geq2$ is not an integer, we first use \eqref{eq:pseudonorm} and Lemma~\ref{lem:weissler} with $q_1 = 2\lfloor k \rfloor$, $q_2 = 2k$ and $\varrho = \sqrt{k / \lfloor k \rfloor}$ (in reverse) to obtain
	\[\frac{\mathcal{M}_k(N)}{(\log{N})^{k^2}} \leq \frac{\left(\mathcal{M}_{\lfloor k\rfloor ,\varrho}(N)\right)^{k/\lfloor k \rfloor}}{(\log{N})^{k^2}} = \left(\frac{\mathcal{M}_{\lfloor k \rfloor,\varrho}(N)}{(\log{N})^{\lfloor k \rfloor^2 \varrho^2}}\right)^{\varrho^2}.\]
	We then use Lemma~\ref{lem:Mkrhoupper} and that $\lfloor k \rfloor^2 \varrho^4 = k^2$, to obtain
	\[\frac{\mathcal{M}_k(N)}{(\log{N})^{k^2}} \leq \exp\left(-k^2\left(\log{k}+\log\log{k} + C_\delta \right)\right)\]
	uniformly for $\lfloor k \rfloor \leq C_1\log{N}/\log\log{N}$. Since $k\geq2$, we applied Lemma~\ref{lem:weissler} with $\varrho = \sqrt{k/\lfloor k\rfloor} \leq \sqrt{3/2}$ and the requirement $0 \leq \varrho \leq \sqrt{2-\delta}$ of Lemma~\ref{lem:Mkrhoupper} is satisfied with e.g. $\delta=1/2$. 
	
	For the second statement, we check that if $k = C_2\log{N}/\log\log{N}$, then
	\[\log\log{N}-\log{k}-\log\log{k} = -\log{C_2}-\log\left(1+\frac{\log{C_2}-\log\log\log{N}}{\log\log{N}}\right).\]
	The assumption that the upper bound in Theorem~\ref{thm:asymptotics} holds the prescribed value of $k$ yields that 
	\begin{multline*}
		\mathcal{M}_k(N) \leq (\log{N})^{k^2} \exp\left(-k^2\log{k}-k^2\log\log{k}+Ck^2\right) \\
		= \exp\left(k^2\left(-\log{C_2}+C + o(1)\right)\right) 
	\end{multline*}
	which contradicts the trivial bound $\mathcal{M}_k(N)\geq1$ as $N\to\infty$ if $C_2>e^C$. 
\end{proof}

\subsection{Proof of the lower bound in Theorem~\ref{thm:asymptotics}} For the proof of the lower bound in Theorem~\ref{thm:asymptotics}, we require the estimate
\[a(k,\varrho)\gamma(k,\varrho)=\exp(-k^2\varrho^2\log k-k^2\varrho^2\log\log k+O_\delta(k^2))\]
where $a(k,\varrho)$ and $\gamma(k,\varrho)$ are the constants appearing in Theorem~\ref{thm:twistedmoment}. The first factor is handled by Lemma~\ref{lem:mdsfactor} and the second we will deduce below. Once we know these estimates, we will apply Lemma~\ref{lem:weissler} after relating $\mathcal{M}_{k,\varrho}(N)$ to the smoothed moments $\mathcal{S}_{k,\varrho}(N)$. 

A precise asymptotic expansion for the volume of the Birkhoff polytope
\[\mathcal{B}_k =\left\{(x_{ij}) \in \mathbb{R}^{k^2} \,:\, x_{ij}\geq0, \quad \sum_{i=1}^k x_{ij} = 1, \quad \sum_{j=1}^k x_{ij} = 1\right\},\]
which appear in the formulas for the moments \eqref{eq:sigma0} when $k\geq1$ is an integer (see \cite{HNR,HL16}), can be found in \cite{CM09}. Extracting the first two terms of this formula gives
\[\log{\operatorname{Vol}(\mathcal{B}_k)} = -k^2\log{k}+k^2 + O(k\log{k}).\]
We have been unable to find a similar result for the polytope $\mathcal{P}_k$ from \eqref{eq:Pk}, however one can extract a similar result\footnote{One finds that, to leading order, $\log \operatorname{Vol}(\mathcal{P}_k)={-k^2\log{k}+O(k^2)}$.} from the following proof.
\begin{lemma}\label{lem:volumeest} 
	Suppose that $\varrho>0$ and that $k$ is a positive integer. Then
	\[\Gamma(1+\varrho^2)^{-k^2}2^{-2k-k^2 \varrho^2}k^{-k^2 \varrho^2} \leq \gamma(k,\varrho) \leq \left(\Gamma(1+k \varrho^2)\right)^{-k}.\]
	In particular, if $\varrho$ is bounded and $k\to\infty$, we have that 
	\begin{equation}\label{eq:stirlings} 
		\log\left(\gamma(k,\varrho)\right) =- k^2 \varrho^2 \log{k} + O(k^2). 
	\end{equation}
\end{lemma}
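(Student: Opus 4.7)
The proof divides naturally into three independent pieces: an upper bound, a lower bound, and a Stirling estimate to extract the asymptotic.

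For the upper bound, my plan is to make the substitution $y_{ij}=x_{ij}^{1/\varrho^2}$ in the defining integral of $\gamma(k,\varrho)$. Since $dx_{ij} = \varrho^2 y_{ij}^{\varrho^2-1}\, dy_{ij}$ and $\varrho^{2k^2}/\Gamma(1+\varrho^2)^{k^2}=1/\Gamma(\varrho^2)^{k^2}$, this rewrites $\gamma(k,\varrho)$ as an integral over the original polytope $\mathcal{P}_k$ with density $\prod_{i,j} y_{ij}^{\varrho^2-1}$ and the same linear factors as before. Bounding each linear factor by $1$ and then dropping the column constraints (only the row constraints $\sum_j y_{ij}\leq 1$ are kept), the integral factorizes over rows. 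Each row integral is a standard Dirichlet integral
\[\int_{\sum_j y_j\leq 1,\,y_j\geq 0}\prod_{j=1}^k y_j^{\varrho^2-1}\,dy=\frac{\Gamma(\varrho^2)^k}{\Gamma(1+k\varrho^2)},\]
and multiplying together $k$ rows cancels the factor $\Gamma(\varrho^2)^{k^2}$ and produces exactly $\Gamma(1+k\varrho^2)^{-k}$.

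For the lower bound, I would restrict the domain of integration to the sub-box $B=\{x_{ij}\in[0,(2k)^{-\varrho^2}]\}$. On $B$ one has $x_{ij}^{1/\varrho^2}\leq 1/(2k)$, so $\sum_i x_{ij}^{1/\varrho^2}\leq 1/2$ and $\sum_j x_{ij}^{1/\varrho^2}\leq 1/2$; hence $B\subset\mathcal{P}_{k,\varrho}$ and each of the $2k$ linear factors in the integrand is at least $1/2$. The product of these factors is therefore $\geq 2^{-2k}$, and the volume of $B$ is $(2k)^{-k^2\varrho^2}$. This gives exactly the claimed lower bound $\Gamma(1+\varrho^2)^{-k^2}\,2^{-2k-k^2\varrho^2}\,k^{-k^2\varrho^2}$.

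The asymptotic \eqref{eq:stirlings} then follows by taking logarithms of both bounds and invoking Stirling's formula. For bounded $\varrho$, the lower bound contributes $-k^2\varrho^2\log k+O(k^2)$ immediately, while for the upper bound $\log\Gamma(1+k\varrho^2)=k\varrho^2\log k+O(k)$ yields $-k\log\Gamma(1+k\varrho^2)=-k^2\varrho^2\log k+O(k^2)$; the two bounds agree to leading order.

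I do not expect any serious obstacle here: the upper bound hinges on recognizing the Dirichlet integral after killing the column constraints, and the lower bound is a one-line box argument. The only mild subtlety is being careful with the Jacobian when passing between the $x$ and $y$ variables; once this is correctly accounted for, the powers of $\Gamma(\varrho^2)$ cancel cleanly with the $\Gamma(1+\varrho^2)^{-k^2}$ prefactor through the identity $\Gamma(1+\varrho^2)=\varrho^2\Gamma(\varrho^2)$.
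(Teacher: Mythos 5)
Your proof is correct and essentially coincides with the paper's argument. The lower bound (restricting to the sub-box where every entry is at most $(2k)^{-\varrho^2}$, so each linear factor is $\geq 1/2$) is verbatim the paper's. For the upper bound, the paper bounds the integrand by $1$, enlarges $\mathcal{P}_{k,\varrho}$ to the product over rows of the nonnegative part of the $\ell^{1/\varrho^2}$-ball, and quotes the closed-form volume of the $\ell^r$-ball; you instead substitute $y_{ij}=x_{ij}^{1/\varrho^2}$ (which, after the Jacobian, exactly undoes the change of variables in the paper's Lemma~\ref{lem:geo}), drop the column constraints, and evaluate the resulting row integrals via the Dirichlet integral formula. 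Since the $\ell^r$-ball volume formula is itself a corollary of Dirichlet's integral, these two presentations are computationally the same; both cancel $\Gamma(\varrho^2)^{k^2}$ against the prefactor and yield $\Gamma(1+k\varrho^2)^{-k}$. The Stirling step is also identical.
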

\begin{proof}
	Recall that
	\[\gamma(k,\varrho) = \frac{1}{\Gamma(1+\varrho^2)^{k^2}}\int_{\mathcal{P}_{k,\varrho}}\prod_{i=1}^k \left(1-\sum_{j=1}^k x_{ij}^{1/\varrho^2}\right)\prod_{j=1}^k \left(1-\sum_{i=1}^k x_{ij}^{1/\varrho^2}\right)\,d\underline{x}\]
	where $\mathcal{P}_{k,\varrho}$ is the twisted polytope \eqref{eq:twistedpoly}. We will find smaller and larger sets where the integrand can be easily estimated and the volume easily computed. 
	\begin{align*}
		\mathcal{L}_{k,\varrho} &= \left\{(x_{ij}) \in \mathbb{R}^{k^2}\,:\, 0 \leq x_{ij} \leq \frac{1}{(2k)^{\varrho^2}}\right\}, \\
		\mathcal{U}_{k,\varrho} &= \left\{(x_{ij}) \in \mathbb{R}^{k^2}\,:\, x_{ij}\geq0, \quad \sum_{j=1}^k x_{ij}^{1/\varrho^2} \leq 1\right\}. 
	\end{align*}
	Clearly $\mathcal{L}_{k,\varrho} \subset \mathcal{P}_{k,\varrho} \subset \mathcal{U}_{k,\varrho}$, so we obtain the lower and upper bounds for $\gamma(k,\varrho)$ by integrating over $\mathcal{L}_{k,\varrho}$ and $\mathcal{U}_{k,\varrho}$, respectively. Now,
	\[\int_{\mathcal{L}_{k,\varrho}}\prod_{i=1}^k \left(1-\sum_{j=1}^k x_{ij}^{1/\varrho^2}\right)\prod_{j=1}^k \left(1-\sum_{i=1}^k x_{ij}^{1/\varrho^2}\right)\,d\underline{x} \geq \left(1-\frac{1}{2}\right)^{2k} \operatorname{Vol}(\mathcal{L}_{k,\varrho})\]
	and clearly $\operatorname{Vol}(\mathcal{L}_{k,\varrho}) = (2k)^{-k^2\varrho^2}$. Combined, this yields the lower bound
	\[\gamma(k,\varrho) \geq \Gamma(1+\varrho^2)^{-k^2}2^{-2k-k^2 \varrho^2}k^{-k^2 \varrho^2}.\]
	For the upper bound, we simply use that the integrand is bounded from above by $1$ to obtain
	\[\gamma(k,\varrho) \leq \frac{\operatorname{Vol}(\mathcal{U}_{k,\varrho})}{\Gamma(1+\varrho^2)^{k^2}}.\]
	We then use the well-known formula for the volume of the $\ell^r$-ball in $\mathbb{R}^n$,
	\[\operatorname{Vol}\left(\left\{x \in \mathbb{R}^n \,:\, \sum_{j=1}^n |x_j|^r \leq 1\right\}\right) = \frac{2^n \left(\Gamma(1+1/r)\right)^n}{\Gamma(1+n/r)},\]
	with $r = 1/\varrho^2$ and $n=k$ which gives a total upper bound of
	\[\gamma(k,\varrho) \leq \frac{1}{\Gamma(1+k \varrho^2)^k}.\]
	The upper bound in \eqref{eq:stirlings} is deduced by
	\[\frac{1}{(\Gamma(1+k \varrho^2))^k} \leq \exp\left(-k^2 \varrho^2 \log{k} + k^2 \varrho^2(1-2\log{\varrho})\right),\]
	where we used Stirling's formula of the form $\Gamma(1+x) \geq (x/e)^x$. 
\end{proof}
\begin{proof}
	[Proof of the lower bound in Theorem~\ref{thm:asymptotics}] For $k\geq2$, we set $\varrho = \sqrt{k/\lceil k \rceil}$ and combine \eqref{eq:pseudonorm} with Lemma~\ref{lem:weissler} to find that
	\[\mathcal{M}_k(N) \geq \left(\mathcal{M}_{\lceil k \rceil,\varrho}(N)\right)^{\varrho^2} \geq \left(\mathcal{S}_{\lceil k \rceil, \varrho}(N)\right)^{\varrho^2}\]
	since clearly $1-\log{n}/\log{N}\leq1$ when $1\leq n \leq N$. From Theorem~\ref{thm:twistedmoment}, we know uniformly for $k = o(\sqrt{\log\log{N}})$ that
	\[\mathcal{S}_{\lceil k \rceil, \varrho}(N) \sim (\log{N})^{\lceil k \rceil^2 \varrho^2} a(\lceil k \rceil, \varrho) \gamma(\lceil k \rceil, \varrho).\]
	However, by choosing some sufficiently small $c$, we may ensure that absolute value of the error term in Theorem~\ref{thm:twistedmoment} is smaller than, say, $1/2$ times the main term uniformly for $2\leq k \leq c \sqrt{\log\log{N}}$. Using the fact that $\lceil k \rceil^2 \varrho^4 = k^2$ and the estimates from Lemma~\ref{lem:mdsfactor} and Lemma~\ref{lem:volumeest}, we can now complete the proof by similar computations as in the proof of the upper bound presented above. 
\end{proof}

\section{Norm comparisons for Dirichlet polynomials} \label{sec:polytorus} 
For an integer $d\geq1$, let $\mathbb{T}^d$ denote the polytorus
\[\mathbb{T}^d = \left\{z = (z_1,\ldots,z_d)\,:\, |z_j|=1 \text{ for } 1\leq j \leq d\right\},\]
and for $0<q\leq \infty$ let $L^q(\mathbb{T}^d)$ denote the usual $L^q$ space with respect to the normalized product Lebesgue arc measure $\mu_d(z) = \mu(z_1)\times \cdots \times \mu(z_d)$. The main tool of the present section is the Bohr correspondence, which allows us to compute the limit measure \eqref{eq:limitmeasure} and the norms \eqref{eq:Lq} and \eqref{eq:Linf} on the polytorus $\mathbb{T}^d$.

Let $N\geq2$ be a positive integer and set $d=\pi(N)$. Every positive integer $n \leq N$ has the unique prime factorization
\[n = \prod_{j=1}^d p_j^{\kappa_j}.\]
The Dirichlet polynomial $f(s) = \sum_{n=1}^N a_n n^{-s}$ corresponds to a polynomial in $d$ variables by replacing each prime number $p_j^{-s}$ with an independent complex variable $z_j$. Specifically, we set
\[F(z) = \sum_{n=1}^N a_n z(n), \qquad\text{where}\qquad z(n) = \prod_{j=1}^d z_j^{\kappa_j}.\]
By Kronecker's theorem, the flow 
\begin{equation}\label{eq:Kflow} 
	\mathcal{T}_d(t) = \left(2^{-it},\,3^{-it},\,\ldots,\,p_d^{-it}\right) 
\end{equation}
is dense on $\mathbb{T}^d$. This implies that the norm \eqref{eq:Linf} is preserved under the Bohr correspondence,
\[\|f\|_\infty = \sup_{z\in\mathbb{T}^d} |F(z)| = \|F\|_{L^\infty(\mathbb{T}^d)}.\]
Using the ergodic theorem for the Kronecker flow \eqref{eq:Kflow}, it is proved in \cite{Bayart02} that the norms \eqref{eq:Lq} are also preserved, 
\begin{equation}\label{eq:polytorus} 
	\|f\|_q = \|F\|_{L^q(\mathbb{T}^d)} = \left(\int_{\mathbb{T}^d} |F(z)|^q\,d\mu_d(z)\right)^{\frac{1}{q}}. 
\end{equation}
The same ergodic argument also gives that
\[\lim_{T\to\infty} \frac{1}{T}\operatorname{meas}\left(\left\{t \in [0,T] \,:\, |f(it)| \geq \lambda \right\}\right) = \mu_d \left(\left\{z \in \mathbb{T}^d \,:\, |F(z)|\geq \lambda\right\}\right).\]
A more elementary proof of \eqref{eq:polytorus} can be found in \cite[Sec.~3]{SS09}. 

We are mainly interested in comparing $\|f_N\|_{2k}$ and $\|f_N\|_\infty$ for the specific Dirichlet polynomial $f_N(s)=S_N \zeta(1/2+s)$. However, we first investigate the case of a general Dirichlet polynomial $f(s) = \sum_{n=1}^N a_n n^{-s}$. We will apply a version of Bernstein's inequality for trigonometric polynomials in several variables (see~\cite[Sec.~5.2]{QQ13}). 
\begin{lemma}\label{lem:bernstein} 
	Let $F(z)$ be a polynomial of degree $k$ in $d$ variables. Then
	\[\left|F(e^{i\theta_1},\ldots,e^{i\theta_d}) - F(e^{i\vartheta_1},\ldots,e^{i\vartheta_d})\right| \leq \frac{\pi}{2} k \|F\|_{L^\infty(\mathbb{T}^d)} \sup_{1\leq j \leq d} |\theta_j - \vartheta_j|,\]
	for every $\theta=(\theta_1,\ldots,\theta_d)$ and $\vartheta = (\vartheta_1,\ldots,\vartheta_d)$ in $(\mathbb{R}/[0,2\pi))^{d}$. 
\end{lemma}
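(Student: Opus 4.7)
The plan is to reduce the multivariate Lipschitz estimate to the classical one-variable Bernstein inequality for entire functions of exponential type. I interpret ``degree $k$'' as the total degree and write $F(z)=\sum_{|\alpha|\leq k}c_\alpha z^\alpha$, where $\alpha=(\alpha_1,\ldots,\alpha_d)\in\mathbb{Z}_{\geq 0}^d$ and $|\alpha|=\alpha_1+\cdots+\alpha_d$.

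The key construction is to parametrize the straight-line path in angle coordinates by a single real variable. Set $\omega=\sup_{1\leq j\leq d}|\theta_j-\vartheta_j|$ and $\lambda_j=(\vartheta_j-\theta_j)/\omega\in[-1,1]$, and define
\[g(t)=F(e^{i(\theta_1+t\lambda_1)},\ldots,e^{i(\theta_d+t\lambda_d)})=\sum_{|\alpha|\leq k}c_\alpha e^{i\alpha\cdot\theta}e^{it(\alpha\cdot\lambda)},\qquad t\in\mathbb{R}.\]
By construction $g(0)$ and $g(\omega)$ are precisely the two values of $F$ appearing on the left-hand side of the lemma, and since $t\mapsto(e^{i(\theta_j+t\lambda_j)})_j$ maps $\mathbb{R}$ into $\mathbb{T}^d$ we have $\|g\|_{L^\infty(\mathbb{R})}\leq\|F\|_{L^\infty(\mathbb{T}^d)}$. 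The crucial observation is that every frequency of $g$ satisfies
\[|\alpha\cdot\lambda|\leq\|\alpha\|_1\,\|\lambda\|_\infty\leq k,\]
so $g$ extends to an entire function of exponential type at most $k$.

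I would then invoke the classical Bernstein inequality for entire functions of exponential type (equivalently, for almost periodic exponential sums whose real spectrum is contained in $[-k,k]$) to conclude $\|g'\|_{L^\infty(\mathbb{R})}\leq k\|g\|_{L^\infty(\mathbb{R})}\leq k\|F\|_{L^\infty(\mathbb{T}^d)}$, and close with the mean value theorem:
\[|F(e^{i\vartheta})-F(e^{i\theta})|=|g(\omega)-g(0)|\leq \omega\,\|g'\|_{L^\infty(\mathbb{R})}\leq k\|F\|_{L^\infty(\mathbb{T}^d)}\sup_{1\leq j\leq d}|\theta_j-\vartheta_j|.\]
This is in fact a factor of $\pi/2$ stronger than the stated inequality. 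There is no serious technical obstacle; the only thing to get right is to interpolate along the diagonal path \emph{simultaneously} in all coordinates. A telescoping one-variable-at-a-time argument, invoking one-variable Bernstein on each slice, would produce an unwanted factor of $d$, whereas the diagonal slice keeps the spectrum inside $[-k,k]$ precisely because of the total-degree bound $\|\alpha\|_1\leq k$.
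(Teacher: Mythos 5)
Your argument is correct, and it in fact proves the inequality with the constant $1$ rather than $\pi/2$. The paper does not prove Lemma~\ref{lem:bernstein} at all: it is quoted from \cite{QQ13}, and the remark immediately following notes that it is not known whether $\pi/2$ can be improved to the one-variable constant $1$. Your diagonal-path argument addresses precisely that point, so it is worth spelling out why it is sound. The three ingredients are: (i) the curve $t\mapsto(e^{i(\theta_j+t\lambda_j)})_j$ stays on $\mathbb{T}^d$ for every real $t$, so $\|g\|_{L^\infty(\mathbb{R})}\leq\|F\|_{L^\infty(\mathbb{T}^d)}$; (ii) the total-degree hypothesis $\|\alpha\|_1\leq k$ together with $\|\lambda\|_\infty\leq 1$ places the real spectrum of $g$ in $[-k,k]$; (iii) a finite exponential sum with real frequencies in $[-k,k]$ extends to an entire function of exponential type at most $k$ that is bounded on $\mathbb{R}$, so the Bernstein inequality for entire functions of exponential type (see Boas, \emph{Entire Functions}, Thm.~11.1.2) gives $\|g'\|_{L^\infty(\mathbb{R})}\leq k\|g\|_{L^\infty(\mathbb{R})}$ with no extraneous constant. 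The mean value theorem then finishes the proof, and the constant $1$ is already sharp for $F(z)=z_1^k$. Your remark that a one-variable-at-a-time telescoping would cost a factor proportional to the sum of the per-variable degrees (up to $dk$) rather than the total degree $k$ is also correct, and is exactly why the simultaneous interpolation along the diagonal is the right move. The factor $\pi/2$ in the cited reference is in all likelihood an artefact of comparing a chord on the circle with the corresponding arc (one has $|\theta|\leq(\pi/2)|e^{i\theta}-1|$ for $|\theta|\leq\pi$), a loss that your argument avoids by never leaving the torus. One small point of hygiene: since $\theta_j,\vartheta_j\in\mathbb{R}/[0,2\pi)$, the quantities $|\theta_j-\vartheta_j|$ should be computed with representatives chosen in $[-\pi,\pi]$, so that $|\lambda_j|\leq 1$ genuinely holds; with that convention the argument is complete. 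As the paper's own remark observes, this improvement would also allow the $\pi^2$ in Theorem~\ref{thm:glest} to be replaced by $2\pi$.
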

\begin{remark}
	It is not known whether the constant $\pi/2$ in Lemma~\ref{lem:bernstein} can be replaced with the smaller constant $1$, which is the correct statement for $d=1$. If this indeed holds, then $\pi^2$ in Theorem~\ref{thm:glest} can be replaced with $2\pi$. 
\end{remark}

A straightforward argument (see below) shows that if $F$ is a polynomial of degree $k$ in $d$ variables, then $\|F\|_{L^q(\mathbb{T}^d)} \gg \|F\|_{L^\infty(\mathbb{T}^d)}$ whenever $q\geq d \log{k}$. However, for Dirichlet polynomials we can do better, since the degree of the variables corresponding to large primes is restricted. We will see later that the exponent $\pi(N)$ in the following result is sharp. 
\begin{theorem}\label{thm:glest} 
	Let $F(z) = \sum_{n=1}^N a_n z(n)$. Set $d = \pi(N)$ and for $0<\lambda<1$ consider the set
	\[X_\lambda = \left\{z \in \mathbb{T}^d \,:\, |F(z)|\geq \lambda\|F\|_{L^\infty(\mathbb{T}^d)} \right\}.\]
	Then
	\[\mu_d(X_\lambda) \geq \left(\frac{1-\lambda}{\pi^2}\right)^{\pi(N)} e^{-\sqrt{N}}.\]
\end{theorem}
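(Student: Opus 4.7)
The plan is to exploit the decomposition of the $d = \pi(N)$ coordinates of $\mathbb{T}^d$ into those corresponding to ``small'' primes $p \leq \sqrt{N}$ (of which there are $d_1 = \pi(\sqrt{N})$) and ``large'' primes $p > \sqrt{N}$ (of which there are $d_2 = d - d_1$). The key observation is that every integer $n \leq N$ has at most one prime factor exceeding $\sqrt{N}$, so writing $z = (z', z'')$ accordingly, each monomial $z(n)$ has $z''$-degree either zero or one. Thus $F(z', z'')$ viewed as a polynomial in $z''$ with $z'$ fixed has total degree exactly one, putting us in position to apply Lemma~\ref{lem:bernstein} efficiently in the large-prime variables.

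Fix $z_0 = (z'_0, z''_0) \in \mathbb{T}^d$ with $|F(z_0)| = \|F\|_{L^\infty(\mathbb{T}^d)}$, set $\epsilon = (1-\lambda)/2$, and let $B''$ denote the box of points $z'' \in \mathbb{T}^{d_2}$ whose arguments differ from those of $z''_0$ by at most $2\epsilon/\pi$ coordinatewise. First, I would apply Lemma~\ref{lem:bernstein} with $k=1$ to $F(z'_0, \cdot)$ to obtain $|F(z'_0, z'') - F(z_0)| \leq \epsilon\|F\|_\infty$ for every $z'' \in B''$. Second, for each such $z''$, I would apply the lemma to $F(\cdot, z'')$, whose total $z'$-degree is bounded by $K := \lfloor \log_2 N \rfloor$ (since $\Omega(m) \leq \log_2 m$ for every monomial index $m \leq N$) and whose sup norm is at most $\|F\|_\infty$. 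Taking the box $B'$ of points $z' \in \mathbb{T}^{d_1}$ whose arguments differ from those of $z'_0$ by at most $2\epsilon/(\pi K)$, this yields $|F(z',z'') - F(z'_0, z'')| \leq \epsilon\|F\|_\infty$, hence $|F(z)| \geq \lambda\|F\|_\infty$ throughout $B' \times B''$ by the triangle inequality. Computing the normalized measure now gives
\[\mu_d(X_\lambda) \geq \mu_d(B' \times B'') = \left(\frac{2\epsilon}{\pi^2 K}\right)^{d_1}\left(\frac{2\epsilon}{\pi^2}\right)^{d_2} = \left(\frac{1-\lambda}{\pi^2}\right)^{\pi(N)} K^{-\pi(\sqrt{N})},\]
and it remains to absorb $K^{\pi(\sqrt{N})} \leq (\log N)^{\pi(\sqrt{N})} \leq e^{\sqrt{N}}$, which reduces to $\pi(\sqrt{N})\log\log N \leq \sqrt{N}$ and follows from the prime number theorem bound $\pi(\sqrt{N}) \ll \sqrt{N}/\log N$.

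The hard part is not any single step but the calibration of the splitting threshold. Setting it below $\sqrt{N}$ would allow some $n \leq N$ to carry two ``large'' prime factors and would destroy the linearity of $F$ in $z''$; setting it much above $\sqrt{N}$ would inflate the small-prime loss $K^{d_1}$ beyond the allotted $e^{\sqrt{N}}$. The threshold $\sqrt{N}$ is precisely where these competing constraints balance, producing the clean factor $((1-\lambda)/\pi^2)^{\pi(N)}$ together with an exponentially small and ultimately negligible $e^{-\sqrt{N}}$ correction.
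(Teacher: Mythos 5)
Your proof is essentially the paper's own: the same splitting of coordinates at $\sqrt{N}$, the same two-stage application of Lemma~\ref{lem:bernstein} (degree $1$ in the large-prime variables since each $n\leq N$ has at most one prime factor $>\sqrt{N}$, degree $\lesssim \log_2 N$ in the small-prime variables), the same box around a maximizer $z_0$, and the same measure computation. One harmless slip: $K=\lfloor \log_2 N\rfloor$ need not satisfy $K\leq \log N$ (e.g.\ $N=8$ gives $K=3>\log 8$), but since $K\leq \log N/\log 2$ the absorption $K^{\pi(\sqrt{N})}\leq e^{\sqrt{N}}$ still follows from $\pi(\sqrt{N})\ll\sqrt{N}/\log N$ exactly as in the paper.
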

\begin{proof}
	Since $\mathbb{T}^d$ is compact, there is at least one point
	\[w = (e^{i\vartheta_1},\ldots, e^{i\vartheta_d})\]
	where the supremum is attained, so that $\|F\|_{L^\infty(\mathbb{T}^d)} = |F(w)|$. Let
	\[z = (e^{i\theta_1},\ldots, e^{i\theta_d})\]
	denote an arbitrary point on $\mathbb{T}^d$. Define $d_1 = \pi(\sqrt{N})$ and $d_2 = d - d_1$. It follows from the triangle inequality and Lemma~\ref{lem:bernstein} that 
	\begin{align*}
		|F(z) - F(w)| &\leq |F(z_1,z_2)-F(w_1,z_2)| + |F(w_1,z_2)-F(w_1,w_2)| \\
		&\leq \frac{\pi}{2} \|F\|_{L^\infty(\mathbb{T}^d)} \left(\frac{\log{N}}{\log{2}} \sup_{1 \leq j \leq d_1} |\theta_j - \vartheta_j| + \sup_{d_1 < j \leq d} |\theta_j - \vartheta_j| \right). 
	\end{align*}
	Here we used that if $z_2$ is fixed, then $F(\cdot,z_2)$ has degree at most
	\[\max_{1\leq n\leq N} \Omega(n) \leq \frac{\log{N}}{\log{2}}\]
	in $z_1$ and conversely if $z_1$ is fixed, then $F(z_1,\cdot)$ has degree $1$ in $z_2$. Moreover,
	\[|F(z)-F(w)|\geq |F(w)|-|F(z)| = \|F\|_{L^\infty(\mathbb{T}^d)} -|F(z)|.\]
	so in particular, $w \in X_\lambda$ whenever
	\[\lambda \leq 1 - \frac{\pi}{2}\left(\frac{\log{N}}{\log{2}} \sup_{1 \leq j \leq d_1} |\theta_j - \vartheta_j| + \sup_{d_1 < j \leq d} |\theta_j - \vartheta_j| \right).\]
	Hence $w \in X_\lambda$ also holds whenever
	\[\sup_{1 \leq j \leq d_1} |\theta_j - \vartheta_j| \leq \frac{1-\lambda}{\pi (\log{N})/\log{2}} \qquad \text{and} \qquad \sup_{d_1 < j \leq d} |\theta_j - \vartheta_j| \leq \frac{1-\lambda}{\pi}.\]
	Note that for any $\vartheta\in\mathbb{R}/[0,2\pi)$ we have that
	\[\mu_1 \left(\left\{z = e^{i\theta} \in \mathbb{T}\,:\, |\theta-\vartheta|\leq \xi \right\}\right) = \frac{\xi}{\pi},\]
	so we therefore conclude that
	\[\mu_d(X_\lambda) \geq \left(\frac{1}{\pi}\right)^d \left(\frac{1-\lambda}{\pi}\right)^{d_1+d_2} \left(\frac{\log{2}}{\log{N}}\right)^{d_1} \geq\left(\frac{1-\lambda}{\pi^2}\right)^{\pi(N)} e^{-\sqrt{N}},\]
	where we in the final inequality used that $d_1 = \pi(\sqrt{N})$. 
\end{proof}

Setting $\lambda = 1/2$ in Theorem~\ref{thm:glest}, we find that
\[\|F\|_{L^{2k}(\mathbb{T}^d)} \geq \left(\int_{X_{1/2}} |F(z)|^{2k}\,d\mu_d(z)\right)^\frac{1}{2k} \geq \frac{1}{2} \|F\|_{L^\infty(\mathbb{T}^d)} \left(\mu_d(X_{1/2})\right)^{1/2k}.\]
In particular, if $k\geq \pi(N)$ then $\|F\|_{L^{2k}(\mathbb{T}^d)} \geq (4\pi^2 e)^{-1} \|F\|_{L^\infty(\mathbb{T}^d)}$. By the Bohr correspondence, we have hence proved the following result. We will also present a different proof below, which was shown to us by K. Seip.
\begin{corollary}\label{cor:normcomp} 
	If $f(s) = \sum_{n=1}^N a_n n^{-s}$ and $k\gg N/\log{N}$, then
	\[\|f\|_\infty \ll \|f\|_{2k}.\]
\end{corollary}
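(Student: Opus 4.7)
\textbf{My plan} is to combine the Bohr correspondence with a Cauchy--Schwarz estimate applied to $F^k$, and then to bound the number of monomials of $F^k$ via a structural refinement of the smooth-number count. By the Bohr correspondence the claim reduces to proving $\|F\|_{L^\infty(\mathbb{T}^d)} \ll \|F\|_{L^{2k}(\mathbb{T}^d)}$ for $F(z) = \sum_{n \leq N} a_n z(n)$ on $\mathbb{T}^d$ with $d = \pi(N)$. Assuming that $k$ is a positive integer (without loss of generality, since $L^q$ norms are nondecreasing in $q$ on a probability space), I would expand
\[
F(z)^k = \sum_{n \in S} c_n z(n),
\]
where $S$ is the set of positive integers admitting a representation $n = n_1 \cdots n_k$ with each $n_j \leq N$. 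Parseval gives $\sum_{n \in S} |c_n|^2 = \|F\|_{L^{2k}}^{2k}$, and Cauchy--Schwarz at a point $w$ attaining $|F(w)| = \|F\|_\infty$ then yields
\[
\|F\|_\infty^{2k} = |F(w)^k|^2 \leq |S| \cdot \|F\|_{L^{2k}}^{2k}.
\]
It therefore suffices to prove $|S|^{1/(2k)} = O(1)$ whenever $k \gg N/\log N$.

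For this, I would factor each $n \in S$ as $n = n' n''$, with $n'$ supported on primes $p \leq \sqrt N$ and $n''$ on primes $p > \sqrt N$. The decisive structural observation is that any factor $n_j \leq N$ contains at most one prime exceeding $\sqrt N$, since a product of two such primes already exceeds $N$. Summing over the $k$ factors, the total multiplicity of large primes in $n$ is at most $k$, so $n''$ takes at most $\binom{\pi(N) - \pi(\sqrt N) + k}{k} = \exp(O(k))$ values in the regime $k \gg \pi(N)$. Meanwhile $n' \leq N^k$ is $\sqrt N$-smooth, contributing at most $\prod_{p \leq \sqrt N}(k \log N/\log p + 1) \leq \exp(O(\sqrt N))$ values. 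Since $\sqrt N = o(k)$ whenever $k \gg N/\log N$, these estimates combine to give $|S| \leq \exp(O(k))$, and hence $|S|^{1/(2k)} = O(1)$ as required.

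The main obstacle is exactly this sharp bound on $|S|$. The naive estimate $|S| \leq \Psi(N^k, N)$, counting all $N$-smooth integers up to $N^k$, is too weak in the range where $k$ is comparable to $N/\log N$; it produces an unbounded factor of order $\sqrt{\log N}$ or worse. The improvement to the full range $k \gg N/\log N$ relies on the fact that admitting a factorization $n = n_1 \cdots n_k$ with each $n_j \leq N$ is strictly stronger than being $N$-smooth with $n \leq N^k$, because it tightly constrains the large-prime content of $n$ as described above. This structural refinement of the smooth-number bound is what enables the Cauchy--Schwarz argument to deliver the range claimed in the corollary.
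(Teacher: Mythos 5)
Your argument is correct, and it is essentially the same strategy as the paper's second proof of this corollary (attributed to K.\ Seip): lift $F$ to the polytorus, apply Cauchy--Schwarz to $F^k$, and bound the number of monomials appearing in $F^k$.

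Where you diverge is in bounding that monomial count. The paper bounds it by $\Psi(N^k, N)$ and invokes the estimate $\Psi(x,\log x)\ll e^{c\log x/\log\log x}$, applied with $k=\lceil N/\log N\rceil$ so that $\log(N^k)=N$; this gives $\Psi(N^k,N)^{1/(2k)}\ll e^{c/2}=O(1)$ directly, and the larger range of $k$ follows from monotonicity of $L^q$ norms on the probability space $\mathbb{T}^d$. Your claim that the $\Psi$ bound is ``too weak'' and ``produces an unbounded factor of order $\sqrt{\log N}$ or worse'' is therefore not correct. I suspect you were estimating $\Psi(N^k,N)$ via the Dickman asymptotic $x\rho(u)$ with $u=k$, giving $\Psi^{1/(2k)}\approx\sqrt{N/k}\sim\sqrt{\log N}$; but that asymptotic fails in the regime $y\approx\log x$, which is exactly the regime the Granville estimate handles, and there $\Psi(N^k,N)=\exp(O(k))$ after all. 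That said, your replacement is a genuine and appealing simplification: the observation that each admissible factor $n_j\leq N$ carries at most one prime $>\sqrt N$ lets you bound the set $S$ of representable integers directly by $\binom{\pi(N)-\pi(\sqrt N)+k}{k}\cdot\prod_{p\leq\sqrt N}(k\log N/\log p+1)\leq e^{O(k)}$, bypassing the smooth-number estimate entirely. One small presentational point: your stated bound $\exp(O(\sqrt N))$ on the smooth-part count is only valid when $k\leq N^{O(1)}$; either track the $k$-dependence (the count is $\exp(O(\pi(\sqrt N)\log(k\log N)))=\exp(o(k))$) or, more cleanly, prove the result for $k=\lceil N/\log N\rceil$ and appeal to monotonicity for larger $k$, as the paper implicitly does.
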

\begin{proof}
	Let $\Psi(x,y)$ denote the number of integers less than $x$ whose prime factors are all less than $y$. The estimate 
	\begin{equation}\label{eq:Psiest} 
		\Psi(x,\log x) \ll e^{c \frac{\log x}{\log\log x}}, 
	\end{equation}
	can be found in \cite[pp.~270--271]{Granville02}. Now take $k$ to be an integer of size $N/\log N$ and consider the function $f^k$, where $f(s) = \sum_{n=1}^N a_n n^{-s}$. This function is a Dirichlet polynomial with at most $\Psi(N^k, N)$ nonzero terms, so the Cauchy--Schwarz inequality gives that
	\[\|f\|_\infty^k = \| f^k \|_\infty \le \sqrt{\Psi(N^k, N)} \| f^k\|_2 \ll e^{c\frac{N}{\log N}} \| f^k\|_2 = e^{c\frac{N}{\log N}}\|f\|_{2k}^k\]
	by \eqref{eq:Psiest} and hence $\| f \|_\infty \ll \| f\|_{2k}$ whenever $k\gg N/\log{N}\sim\pi(N)$. 
\end{proof}

To see that $k = N/\log{N}$ cannot generally be improved in Corollary~\ref{cor:normcomp} (and hence in Theorem~\ref{thm:glest}), we will use Khintchine's inequality (see \cite{KK01}). 
\begin{lemma}\label{lem:khintchine} 
	If $2\leq q < \infty$ and $d=\pi(N)$, then
	\[\left\|\sum_{p\leq N} a_p z(p)\right\|_{L^q(\mathbb{T}^d)} \leq {\Gamma\left(1+\frac{q}{2}\right)}^{1/q} \left(\sum_{p\leq N} |a_p|^2\right)^\frac{1}{2}.\]
\end{lemma}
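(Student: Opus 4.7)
The plan is to recognise the sum as a classical Steinhaus sum via the Bohr correspondence \eqref{eq:polytorus} and then invoke the sharp complex Khintchine inequality. First I would use \eqref{eq:polytorus} to rewrite the left-hand side: under the Haar measure $\mu_d$ the coordinates $z_1,\ldots,z_d$ are independent and uniformly distributed on $\mathbb{T}$, and since each prime $p=p_j\leq N$ contributes only the single factor $z(p_j)=z_j$, the Dirichlet polynomial $\sum_{p\leq N}a_p z(p)$ becomes the Steinhaus sum $S=\sum_{j=1}^d a_{p_j}z_j$. The claim is therefore nothing but the complex Khintchine inequality with the sharp constant $\Gamma(1+q/2)^{1/q}$, which is the content of \cite{KK01}.

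For the case $q=2m$ an even integer I would give a direct computation. Expanding $|S|^{2m}=(S\bar S)^m$ by the multinomial theorem and using the orthogonality relation $\int_\mathbb{T}z^k\,d\mu=\delta_{k,0}$ to eliminate off-diagonal terms gives
\[\int_{\mathbb{T}^d}|S|^{2m}\,d\mu_d=\sum_{|\alpha|=m}\binom{m}{\alpha}^{2}\prod_{j=1}^d|a_{p_j}|^{2\alpha_j}.\]
The trivial estimate $\binom{m}{\alpha}=m!/\prod_j\alpha_j!\leq m!$ together with the multinomial identity $\sum_{|\alpha|=m}\binom{m}{\alpha}\prod_j|a_{p_j}|^{2\alpha_j}=\bigl(\sum_p|a_p|^2\bigr)^m$ then yields the bound $\|S\|_{2m}^{2m}\leq m!\bigl(\sum_p|a_p|^2\bigr)^m$, and since $\Gamma(1+m)=m!$ this is exactly the claim for $q=2m$.

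The main obstacle is obtaining the sharp constant uniformly for all real $q\geq 2$. Naive interpolation between $q=2m-2$ and $q=2m$ via log-convexity of $q\mapsto\log\|S\|_q$ in $1/q$ loses the sharp constant, because $q\mapsto \frac{1}{q}\log\Gamma(1+q/2)$ is not linear in $1/q$ on such subintervals. To recover the correct shape I would compare $S$ directly with the complex Gaussian sum $G=\sum_j a_{p_j}g_j$, where $g_j$ are i.i.d.\ standard complex Gaussian; since $G$ is itself a centred complex Gaussian with $\mathbb{E}|G|^2=\sum_p|a_p|^2$, one computes $\mathbb{E}|G|^q=\Gamma(1+q/2)\bigl(\sum_p|a_p|^2\bigr)^{q/2}$ exactly via the substitution $u=r^2$ in a Rayleigh integral. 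The sharp stochastic domination $\mathbb{E}|S|^q\leq\mathbb{E}|G|^q$ for every $q\geq 2$ then follows from a rotational-invariance/contraction argument exploiting the polar decomposition $g_j=|g_j|Z_j'$ with $Z_j'$ independent Steinhaus; the detailed execution of this comparison is the technical heart of \cite{KK01}, and I would invoke it at this point rather than reproduce it.
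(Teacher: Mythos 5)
Your proposal is correct. The paper does not actually prove this lemma; it simply states it and cites \cite{KK01}, so there is no paper proof to compare against. What you have done is reconstruct the content of that reference: the observation that the Bohr lift turns $\sum_{p\le N}a_p z(p)$ into the Steinhaus sum $\sum_{j\le d}a_{p_j}z_j$ is exactly right, and your self-contained argument for $q=2m$ an even integer (expand $|S|^{2m}$, use orthogonality of the $z_j$ to reduce to the diagonal, bound $\binom{m}{\alpha}\le m!$, then fold back the remaining sum via the multinomial identity) is clean and gives precisely $\|S\|_{2m}^{2m}\le m!\,\bigl(\sum_p|a_p|^2\bigr)^m=\Gamma(1+m)\bigl(\sum_p|a_p|^2\bigr)^m$. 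For general real $q\ge 2$ you correctly identify that naive log-convexity interpolation degrades the constant, and that the sharp constant $\Gamma(1+q/2)^{1/q}$ comes from comparison with a complex Gaussian (whose $q$th absolute moment is an elementary Rayleigh/Gamma integral); deferring the domination $\mathbb{E}|S|^q\le\mathbb{E}|G|^q$ to \cite{KK01} is appropriate, since that contraction step is the genuinely nontrivial part of K\"onig--Kwapie\'n's result. Worth noting: in the paper's actual application the lemma is invoked with $q=2k$ for integer $k$, so your even-exponent computation alone would already suffice there, making your proof more self-contained than the paper's bare citation.
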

In particular, we choose $a_p = 1$ for $p \leq N$, so that $F(z) = \sum_{p \leq N} z(p)$ and $\|F\|_{L^\infty(\mathbb{T}^d)}= \pi(N)$. By Lemma~\ref{lem:khintchine} with $q=2k$ we get that
\[\|F\|_{L^{2k}(\mathbb{T}^d)} \leq \Gamma(1+k)^{1/2k} \sqrt{\pi(N)} \leq\sqrt{k\pi(N)},\]
since $\Gamma(1+k) \leq k^k$. This shows that for general Dirichlet polynomials, the exponent $k = N/\log{N}$ in Corollary~\ref{cor:normcomp} cannot be improved. 

We will finally demonstrate that this can be substantially improved for the specific Dirichlet polynomial $F(z) = \sum_{n=1}^N z(n)/\sqrt{n}$. First, we recall the following well-known result.
\begin{lemma}\label{lem:yproj} 
	Let $F(z) = \sum_{n=1}^N a_n z(n)$, set $d=\pi(N)$ and for $y\geq2$ define
	\[S(N,y) = \{n \leq N \,:\, p|n \implies p\leq y\}.\]
	If $F_y(z) = \sum_{n \in S(N,y)} a_n z(n)$ then $\|F_y\|_{L^q(\mathbb{T}^{\pi(y)})} \leq \|F\|_{L^q(\mathbb{T}^d)}$ for $1 \leq q < \infty$. 
\end{lemma}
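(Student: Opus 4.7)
The plan is to realize $F_y$ as a partial integral of $F$ over the variables corresponding to primes in the interval $(y, N]$, and then deduce the inequality from Jensen's inequality (equivalently, Minkowski's inequality for integrals).

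First I would write $z = (w, u)$ where $w = (z_1, \ldots, z_{\pi(y)})$ collects the variables corresponding to primes $p \leq y$ and $u = (z_{\pi(y)+1}, \ldots, z_d)$ those corresponding to primes $y < p \leq N$. For each $n \leq N$, the monomial $z(n) = \prod_j z_j^{\kappa_j}$ depends only on $w$ precisely when $n \in S(N,y)$, and otherwise contains at least one factor $z_j^{\kappa_j}$ with $j > \pi(y)$ and $\kappa_j \geq 1$. Since
\[
\int_{\mathbb{T}} z^\kappa \, d\mu(z) =
\begin{cases}
1 & \text{if } \kappa = 0, \\
0 & \text{if } \kappa \neq 0,
\end{cases}
\]
the orthogonality of the characters on $\mathbb{T}^{d-\pi(y)}$ yields
\[
F_y(w) = \int_{\mathbb{T}^{d-\pi(y)}} F(w, u) \, d\mu_{d-\pi(y)}(u),
\]
since every monomial with $n \notin S(N,y)$ integrates to zero while every monomial with $n \in S(N,y)$ is unaffected.

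Next I would apply Jensen's inequality, using that $t \mapsto |t|^q$ is convex for $q \geq 1$ and that $\mu_{d-\pi(y)}$ is a probability measure, to obtain
\[
|F_y(w)|^q \leq \int_{\mathbb{T}^{d-\pi(y)}} |F(w, u)|^q \, d\mu_{d-\pi(y)}(u).
\]
Integrating this inequality over $w \in \mathbb{T}^{\pi(y)}$ and invoking Fubini's theorem on the product measure $\mu_d = \mu_{\pi(y)} \times \mu_{d-\pi(y)}$ gives
\[
\|F_y\|_{L^q(\mathbb{T}^{\pi(y)})}^q \leq \int_{\mathbb{T}^d} |F(z)|^q \, d\mu_d(z) = \|F\|_{L^q(\mathbb{T}^d)}^q,
\]
and taking $q$-th roots completes the proof.

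There is no real obstacle here; the entire argument rests on the elementary observation that integrating out a subset of variables acts as a conditional expectation, which is an $L^q$-contraction for every $q \geq 1$. The only point worth double-checking is the identification of $F_y$ as the partial average, which follows directly from the characterization of $S(N,y)$ in terms of prime support.
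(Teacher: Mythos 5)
Your proof is correct and takes essentially the same route as the paper: both identify $F_y$ as the average of $F$ over the variables corresponding to primes $p>y$, and then use the contractivity of this averaging on $L^q$ (you via Jensen's inequality, the paper via H\"older's inequality applied to the inner integral, which amounts to the same thing).
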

\begin{proof}
	It suffices to note that the map $F \mapsto F_y$ is defined by averaging out the variables corresponding to primes $p>y$ over the corresponding polytorus,
	\[F_y(z_1) = \int_{\mathbb{T}^{\pi(N)-\pi(y)}} F(z_1,z_2)\,d\mu_{\pi(N)-\pi(y)}(z_2)\]
	and then use H\"older's inequality in the inner integral. 
\end{proof}
\begin{theorem}\label{thm:zetacomp} 
	Set $F_N(z) = \sum_{n=1}^N z(n)/\sqrt{n}$ and fix $\varepsilon>0$. If $q\geq N^\varepsilon$, then $\|f_N\|_{L^q(\mathbb{T}^{\pi(N)})} \gg_\varepsilon \sqrt{N}$ as $N\to\infty$. 
\end{theorem}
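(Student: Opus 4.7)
The plan is to exploit the fact that restricting $F_N$ to $y$-smooth numbers, for a judicious choice of $y$, has two complementary effects: it preserves a positive proportion of the $L^\infty$ norm, while dramatically reducing the ambient dimension. I would then replay the proof of Theorem~\ref{thm:glest} on the resulting low-dimensional projection.

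First, set $y = N^{\varepsilon/2}$ and consider $F_{N,y}(z) = \sum_{n \in S(N,y)} z(n)/\sqrt{n}$, a polynomial in only $\pi(y) \asymp N^{\varepsilon/2}/\log N$ variables. By Lemma~\ref{lem:yproj}, $\|F_N\|_{L^q(\mathbb{T}^{\pi(N)})} \geq \|F_{N,y}\|_{L^q(\mathbb{T}^{\pi(y)})}$, so it suffices to bound the right-hand side from below. Since the coefficients of $F_{N,y}$ are positive, the supremum is attained at $(1,\ldots,1)$, giving $\|F_{N,y}\|_\infty = \sum_{n \in S(N,y)} n^{-1/2}$. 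Combining Abel summation with the Hildebrand asymptotic $\Psi(N,y) \sim N\rho(\log N/\log y)$ then shows that this sum is of order $\sqrt{N}\,\rho(2/\varepsilon) \gg_\varepsilon \sqrt{N}$.

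Next, I would replay the proof of Theorem~\ref{thm:glest} for $F_{N,y}$. The crucial gain is that the variables split naturally at $\sqrt{y} = N^{\varepsilon/4}$: for primes $p \leq \sqrt{y}$ the total degree of $F_{N,y}$ in the block $z_1$ is at most $\log N/\log 2$ as before, but for $p \in (\sqrt{y}, y]$ the total degree in the block $z_2$ is at most $\log N / \log \sqrt{y} = 4/\varepsilon$, a \emph{constant} depending only on $\varepsilon$. Tracing through the Bernstein estimate from Lemma~\ref{lem:bernstein} with these two degree bounds yields
\[ \mu_{\pi(y)}\bigl(X_{1/2}\bigr) \geq \left(\frac{c_1}{\log N}\right)^{\pi(\sqrt{y})} (c_2\varepsilon)^{\pi(y)-\pi(\sqrt{y})}, \]
where $X_\lambda = \{z \in \mathbb{T}^{\pi(y)} : |F_{N,y}(z)| \geq \lambda\|F_{N,y}\|_\infty\}$. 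Since $\pi(\sqrt{y}) \asymp N^{\varepsilon/4}/\log N$ and $\pi(y) \asymp N^{\varepsilon/2}/\log N$, both factors combine into $\exp(-C_\varepsilon N^{\varepsilon/2}/\log N)$.

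Finally, for any $q \geq N^\varepsilon$ we have $\mu_{\pi(y)}(X_{1/2})^{1/q} \geq \exp(-C_\varepsilon N^{-\varepsilon/2}/\log N) \to 1$, so that
\[ \|F_N\|_{L^q} \geq \|F_{N,y}\|_{L^q} \geq \tfrac{1}{2}\|F_{N,y}\|_\infty\,\mu_{\pi(y)}(X_{1/2})^{1/q} \gg_\varepsilon \sqrt{N}, \]
as desired. The main technical point to get right is the bookkeeping in the Bernstein step: the block of small-prime variables contributes a factor $(\log N)^{-\pi(\sqrt{y})}$, which must be absorbed into $\exp(-O_\varepsilon(N^{\varepsilon/2}/\log N))$. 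This is what forces $y$ to be at least a small power of $N$ rather than merely polylogarithmic, which in turn is the source of the exponent $N^\varepsilon$ in the final bound.
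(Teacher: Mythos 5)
Your proof is correct and follows essentially the same route as the paper: project onto $y$-smooth numbers via Lemma~\ref{lem:yproj}, evaluate $\|F_{N,y}\|_\infty$ by Abel summation and the Dickman--Hildebrand asymptotic, then compare $L^q$ to $L^\infty$ via the Bernstein argument of Theorem~\ref{thm:glest}. The one cosmetic difference is that the paper applies Lemma~\ref{lem:bernstein} directly to $F_{N,y}$ with a single total-degree bound $\log N/\log 2$ across all $\pi(y)$ variables (taking $y=N^\varepsilon$), which yields the requirement $q\gg\pi(y)\log\log N$; your further split of the variables at $\sqrt{y}$ is a harmless refinement that costs a factor $\rho(2/\varepsilon)$ instead of $\rho(1/\varepsilon)$ in the implied constant and is not needed to reach $q\geq N^\varepsilon$.
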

\begin{proof}
	We first note that
	\[F_{N,y}(z) = \sum_{n \in S(N,y)} \frac{z(n)}{\sqrt{n}}\]
	is a polynomial in $\pi(y)$ variables of degree at most $\log{N}/\log{2}$. Using Lemma~\ref{lem:yproj} and then Lemma~\ref{lem:bernstein} as before, we may therefore conclude that
	\[\|F_N\|_{L^q(\mathbb{T}^{\pi(N})} \geq \|F_{N,y}\|_{L^q(\mathbb{T}^{\pi(y)})} \gg \|F_{N,y}\|_{L^\infty(\mathbb{T}^\pi(y))} = \sum_{n\in S(N,y)} \frac{1}{\sqrt{n}}\]
	provided $q \geq \pi(y)\log\left(\log{N}/\log{2}\right)$. Choosing $y = N^\varepsilon$ and using Abel summation and a standard estimate on smooth numbers (see e.g. \cite[Sec.~1]{Granville02}), we get
	\[\sum_{n\in S(N,N^\varepsilon)} \frac{1}{\sqrt{n}} \sim C_\varepsilon \sqrt{N}.\]
	Here $C_\varepsilon = 2 \varrho(1/\varepsilon)$ and $\varrho$ denotes Dickman's function. Hence we require that $q \geq \pi(N^\varepsilon) \log\log{N}$, so certainly $q\geq N^\varepsilon$ is acceptable. 
\end{proof}

\section*{Acknowledgements} The authors would like to express their gratitude to A. Bondarenko for several enlightening discussions and to the anonymous referee for a careful reading of the manuscript.

\bibliographystyle{amsplain} 
\bibliography{pi}

\end{document}